\newtheorem*{theorem*}{Theorem}
\newtheorem{teo}{Theorem}[section]
\theoremstyle{definition}
\newtheorem{prop}[teo]{Proposition}
\newtheorem{lem}[teo]{Lemma}
\newtheorem{coro}[teo]{Corollary}
\newtheorem{defi}[teo]{Definition}
\newtheorem{rem}[teo]{Remark}
\newtheorem{ejem}[teo]{Example}
\newtheorem{problem}[teo]{Problem}
\DeclareMathOperator\ad{\mathrm{ad}}
\DeclareMathOperator\co{\mathrm{conv}}
\DeclareMathOperator\Ad{\mathrm{Ad}}
\DeclareMathOperator\ran{\mathrm{Ran}}
\DeclareMathOperator\diag{\mathrm{diag}}
\DeclareMathOperator\tr{\mathrm{Tr}}
\DeclareMathOperator\lu{\mathfrak{u}_n}
\DeclareMathOperator\slu{\mathfrak{su}_n}
\DeclareMathOperator\U{\mathsf{\mathbf U}_n}
\DeclareMathOperator\OO{\mathscr{O}}
\newcommand{\R}{\mathbb{R}}
\newcommand{\C}{\mathbb{C}}
\newcommand{\MM}{\mathfrak{M}}
\begin{document}

\makeatletter

\title[Sphere of a weakly invariant norm]{Weakly invariant norms: geometry of spheres in the space of skew-Hermitian matrices}
\author{Gabriel Larotonda}
\address{Departamento de Matem\'atica, FCEyN-UBA, and Instituto Argentino de Matem\'atica, CONICET, Buenos Aires, Argentina}
\email{glaroton@dm.uba.ar}
\author{Ivan Rey}
\address{Departamento de Matem\'atica, FCEyN-UBA, and Instituto Argentino de Matem\'atica, CONICET, Buenos Aires, Argentina}
\email{ivanrey1988@gmail.com}
\date{\today}
\keywords{adjoint action; convex set; Finsler norm; majorization; norming functional; polytope; skew-Hermitian matrix; supporting hyperplane;  unitarily invariant norm; weakly invariant norm}
\subjclass[2020]{Primary 15A60, 52A21; Secondary 15B57, 46B20}
\makeatother


\begin{abstract}
Let $N$ be a weakly unitarily invariant norm (i.e. invariant for the coadjoint action of the unitary group) in the space of skew-Hermitian matrices $\mathfrak{u}_n(\mathbb C)$. In this paper we study the geometry of the unit sphere of such a norm, and we show how its geometric properties are encoded by the majorization properties of the eigenvalues of the matrices. We give a detailed characterization of norming functionals of elements for a given norm, and we then prove a sharp criterion for the commutator $[X,[X,V]]$ to be in the hyperplane that supports $V$ in the unit sphere. We show that the adjoint action $V\mapsto V+[X,V]$ of $\mathfrak{u}_n(\mathbb C)$ on itself pushes vectors away from the unit sphere. As an application of the previous results, for a strictly convex norm, we prove that the norm is preserved by this last action if and only if $X$ commutes with $V$. We give a more detailed description in the case of any weakly $Ad$-invariant norm.
\end{abstract}


\setlength{\parindent}{0cm} 

\maketitle

\tableofcontents

\section{Introduction}

The theory of unitarily invariant norms in the space $M_n(\mathbb C)$ of $n\times n$ complex matrices is well-established, and there is a huge amount of research around it, both theoretical and numerical. It has many applications outside matrix analysis, such as quantum information theory (see \cite{haya,hole,nielsen} and the references therein), and signal processing -via frames and convex optimization- \cite{absil,befi,fimi,tropp} to mention the most relevant. The theory of \textit{weakly} invariant norms however, is less popular, and since such a norm is defined by being invariant for the coadjoint action of the unitary group $\U=\U(\mathbb C)$, we propose that its natural setting is the Lie algebra of that group, the space  $\lu=\mathfrak u_n(\mathbb C)$ of skew-adjoint matrices. Moreover, since we have a correspondence with norms and unit balls, which are convex sets, we propose that the condition for the norm to be fully homogeneous (equivalently, that the unit ball is a balanced set) should be \textit{a priori} dropped. This leaves us with a larger and generous family of positively homogeneous and subadditive norms, which we call $\Ad$-invariant Finsler norms in $\lu$. This family of norms is in good correspondence with symmetric convex bodies of $\mathbb R^n$, and the study of the faces of the unit sphere can be engaged by majorization and convex analysis tools.

\medskip

It is noticeable that the (strong unitarily invariance) condition $\|UXV\|_{\phi}=\|X\|_{\phi}$ for unitary matrices $U,V$ and $X\in M_n(\mathbb C)$ ensures the correspondence between (strongly) unitarily invariant norms and symmetric gauge functions in $\mathbb R^n$. Moreover, there is a finite family of norms (the Ky-Fan norms) that control the size of two given matrices:  $\|X\|_{\phi}\le \|Y\|_{\phi}$ for any unitarily invariant norm if and only if the inequality holds for the Ky-Fan norms $\|X\|_{(k)}\le \|Y\|_{(k)}$, $k=1,\dots,n$. On the other hand, it is well-known that it does not exist such finite family that controls the size of $X,Y$ for any given weakly invariant  norm.  We have here \textit{orbit norms} as a good substitute for this control; these are the $\Ad$-invariant Finsler norms whose dual unit ball is exactly $\co(\OO_C)$, the convex hull of the unitary orbit of a  zero trace matrix $C\in \lu$. These orbit norms are a simplified (and adapted to the matrix setting) version of the Hofer norms  introduced in \cite{larmi}. The norming functionals of a given norm can then be completely described, and the same happens with the faces of the unit sphere (which is in several important cases a polytope) and its extreme points. These descriptions are then generalized to obtain properties of norming functionals of any Finsler norm (Proposition \ref{ordenados}), which we then apply to the main result of this paper (Theorem \ref{teoN}): we characterize those $X$ such that $V+[X,[X,V]]$ belongs to the affine hyperplane supporting $V$ in the sphere of the norm (extending a result which is apparent for the Frobenius norm).  With  similar proofs, we note that $\|V+[X,V]\|\ge \|V\|$ for any $X\in\lu$ and any $\Ad$-invariant Finsler norm (this can be read in terms of Birkhoff orthogonality, see Remark \ref{bhort}): in particular if $[X,V]\ne 0$, then the inequality is strict for any strictly convex norm. It is worth noticing that this notion of Birkhoff orthogonality (albeit in the setting of Hermitian matrices) can give a different perspective on the NPPT bounded entanglement problem of quantum information theory (see for instance \cite[Section 4.4]{ela} and the references therein).

\medskip

This paper is organized as follows: in Section \ref{cb} we present the correspondence from norms to convex bodies in $\mathbb R^n$, the relation with majorization, and the family of \textit{orbit norms}. In Section \ref{fc}, by considering the dual norm $N'$, the correspondence among unit spheres of both norms is achieved by means of polar duality, using the trace functional. Therefore the faces of the unit sphere can be described by unit norm functionals, or equivalently, by elements of \textit{dual} unit norm in $\lu$ (of particular interest are those spheres which are polytopes such as the sphere of the trace norm, the sphere of the spectral norm, or the sphere of an orbit norm).  In Section \ref{fb} we obtain a precise characterization of these norming functionals, of the extreme points and the smooth points of the sphere. Then we prove the result (Theorem \ref{teoN}) which establishes necessary ans sufficient conditions for the inequality $\varphi([X,[X,V]])\le 0$ to be an equality (here $\varphi$ is a norming functional for $V$). As an application of the tools developed, in  Section \ref{action} we study in further detail the adjoint action $V\mapsto V+[X,V]$ of $\lu$ on itself: we show that this action always pushes vectors away from the unit sphere, and it only preserves the norm under strict conditions. These conditions imply in particular that for a strictly convex norm, we have $\|V+[X,V]\|\ge \|V\|$, with equality if and only if $[X,V]=0$. 

\smallskip

With these tools, the geometry of the unitary group $\U$ equipped with the left-invariant metric induced by a Finsler norm $N$ will be studied in a follow-up paper -in preparation-, following ideas from \cite{alr,cocoeste,alv,atkin1,dmlr0,lar19,larmi}.

\section{Ad-invariant Finsler norms in $\lu$}

Throughout this work, $\U$ will denote the unitary matrix group $\U=\mathsf{\mathbf U}(n,\mathbb C)=\{U\in M_n(\mathbb C): U^{-1}=U^*\}$, and $\lu=\mathfrak u_n(\mathbb C)$ is the Lie algebra of the Lie group $\U$, the real linear space of skew-Hermitian matrices, with real dimension  $dim_{\mathbb R}(\lu)=n^2$. We will also denote  $\slu=\mathfrak su_n(\mathbb C)$ the traceless skew-Hermitian matrices. We will be interested in norms invariant for the coadjoint action of the group, and in fact we will also deal with a broader class, the Finsler norms:

\begin{defi}[$\Ad$-invariant Finsler norm in $\lu$] It is a function $\|\cdot\|:\lu\to\mathbb R_{\ge 0}$ such that for any $X,Y\in\lu$
\begin{enumerate}
\item $\|X\|=0$ if and only if $X=0$ (non-degenerate)
\item $\|X+Y\|\le \|X\|+\|Y\|$ (subadditive)
\item $\|\lambda X\|=\lambda \|X\|$ for any $\lambda\in \mathbb R_{\ge 0}$ (positively homogenous)
\item $\|UXU^*\|=\|X\|$ for any $U\in \U$ ($\Ad$-invariant).
\end{enumerate}
Note that we do not require smoothness neither full  homogeneity. Even though a Finsler norm is not necessarily a  norm, it induces a topology in $\lu$, which is given by the notion of convergence:
$x_{\alpha}\longrightarrow x  \Leftrightarrow \|x_{\alpha}-x\|\longrightarrow 0$. 
\end{defi}

\begin{defi} In standard matrix theory books, a norm $||\cdot ||$ defined in $M_n(\C)$ is called  weakly invariant if $||UXU^*||=||X||$ for every $X\in M_n(\C)$ and $U\in \U$, and strongly invariant if $||UXV||=||X||$ for every $X\in M_n(\C)$ and $U,V\in \U$. 
\end{defi}

\begin{rem}[Minkowski gauges]\label{minkowsky} Let $\|\cdot\|$ be an $\Ad$-invariant Finsler norm in $\lu$ then, for $r>0$, the set $B_r=\{V\in\lu: \|V\|<r\}$ is open, absorbing, convex and Ad-invariant. Reciprocally, if $B$ is an absorbing, convex and Ad-invariant set around $0\in \lu$ then its Minkowski gauge function $\rho_B$  defines an Ad-invariant Finsler semi-norm ($\rho_B(X)$ might be zero for $X\neq 0$) in $\lu$. Moreover if $B$ does not contain rays from zero, then the Minkowski gauge defines an Ad-invariant Finsler norm ($\rho(X)=0$ implies $X=0$). 
\end{rem}

\begin{rem}\label{normal} Let $ip\in \lu$ with $p$ a one-dimensional orthogonal projection. Then for every one-dimensional projection $q$ we have $q=Upu^*$ for some $U\in \U$, thus $\|iq\|=\|UipU^*\|=\|ip\|$; we choose the normalization $\|ip\|=1$. All Ad-invariant Finsler norms in $\lu$ are equivalent, and furthermore, they are equivalent to any other norm, in particular they all induce the same topology.  Every weakly invariant norm in $M_n(\mathbb C)$ induces by restriction, an $\Ad$-invariant norm in $\lu$. Reciprocally, for any $\Ad$-invariant norm in $\lu$ we can construct one (in fact, many) weakly invariant norm in $M_n(\mathbb C)$ such that its restriction to $\lu$ is the original norm, see Remark \ref{taylor}.
\end{rem}

\begin{ejem} A short list with some relevant $\Ad$-invariant norms follows: 
\begin{enumerate} 
\item $\|X\|_1=\tr |X|$ (trace norm).
\item $\|X\|_{\infty}=\max \{|\lambda|:\lambda\in\sigma (X)\}$ (spectral norm).
\item For $1<p<\infty$, $\|X\|_p=\left(\tr|X|^p\right)^{\frac{1}{p}}$ ($p$-norms).
\item $\|X\|_{(k)}=\sum\limits_{1\leq i\leq k} |\lambda_i|$, where $\lambda_i$ are the eigenvalues of $X$ ordered decreasingly in absolute value (Ky-Fan norms).
\item $\|X\|_C=\max \{|\tr(CU^*XU)|:U\in \U\}$  for $C\in M_n(\C)$ a non scalar matrix with $\tr(C)\neq 0$ (C-radius norms).
\end{enumerate}
\end{ejem}

We have $\|\cdot\|_{KF(1)}=\|\cdot\|_{\infty}$, $\|\cdot\|_{KF(n)}=\|\cdot\|_{1}$, and $\|X\|_2=\|X\|_F$ (the Frobenius norm). All of these examples are in fact norms (fully homogeneous), and all but the last one are in fact strongly unitarily invariant; a good reference on the subject is Bhatia's book \cite[Chapter IV]{bhatia}. In fact, the proof that the last one is a norm can be found in \cite[Proposition IV.4.4]{bhatia}. Moreover, it is well-known that if $C$ is any one-dimensional projection then the $C$-norm is the numerical radius, hence  it is equal to the spectral norm in $\lu$. 

\begin{rem} For $A,B\in M_n(\R)$ the bilinear application given by 
$$(A |B):=\tr(AB^*)$$
defines a real inner product, known as the Hilbert-Schmidt inner product. The Frobenius norm $||\cdot ||_F$ is the norm induced by this inner product. In fact, the Frobenius norm and it's positive multiples, are the only $\Ad$-invariant norms defined in $\lu$ that are induced by an inner product.
\end{rem}

\begin{rem}[Trace duality, norming functionals]\label{trduality}We will use the duality induced by the trace of $M_n(\mathbb C)$ to identify the dual space of $(\lu,\|\cdot\|)$ with $\lu$ as follows:  for $\varphi\in \lu'$ (the dual space of $\lu$) consider 
$$
\|\varphi\|=\max\{\varphi(X): \|X\|\le 1\}
$$
(without the modulus); it is easy to check that this defines a Finsler norm in the dual space. For $V\in\lu$ define
\begin{equation}\label{normi}
\varphi_V(X)=(V|X)=\tr(VX^*)=-\tr(VX),
\end{equation}
then $\varphi_V\in \lu'$. Since $(\cdot |\cdot)$ is non-degenerate, each non-zero $\varphi\in\lu'$ comes exactly from one non-zero $V\in \lu$ in this fashion.  We say that $\varphi\in \lu'$ is a \textit{unit norming functional} of $V\in\lu$ if $\varphi(V)=\|V\|$ and $\|\varphi\|=1$. By the Hahn-Banach theorem each non-zero vector $V\in\lu$ admits at least one ot these; for brevity we say that $\varphi$ is \textit{norming} for $V$. Note that 
\begin{align*}
(V|\,[X,Y]) & =-\tr(V[X,Y])=-\tr(VXY-VYX)=-\tr(XYV-XVY)\\
& = -\tr(X(YV-VY))=-\tr(YVX-YXV)=-\tr(Y(VX-XV))
\end{align*}
by the cyclicity of the trace. Therefore we obtain the cyclic identities
\begin{equation}\label{cyclic}
(V|\,[X,Y])=(Y|\,[V,X])=(X|\,[Y,V])
\end{equation}
for any $X,Y,V\in \lu$. These can be rewritten as $(V\,|\ad X(Y))=-(\ad X(V)|Y) =(\ad Y(V)\,|\,X)$. In particular $\ad X:\lu\to \lu$ is skew-adjoint for the Hilbert-Schmidt inner product, for any $X\in\lu$ (which implies that $-\ad^2(X)\ge 0$ hence $\|V+[X,[X,V]]\|_F\ge \|V\|_F$, see Theorem \ref{vmasxv} below for a generalization). 
\end{rem}

\subsection{Convex bodies and norms}\label{cb}

Next we state a version of the theorem of Horn and Uhlmann regarding the convex hull of the orbit of a self-adjoint matrix \cite{horn};  the equivalences for fully homogeneous norms can be found in Bhatia's book \cite[Theorem IV.4.7]{bhatia}.  First we need some notations:

\begin{rem}[Notations]Any matrix $X\in \lu$ can be written as $X=UDU^*$, where $U\in \U$ and $D$ is a diagonal matrix with purely imaginary entries. Hence the eigenvalues and singular values of $X$ are the same in absolute value, i.e. $|\lambda_i(X)|=|x_i|=\lambda_i(|X|)$. We denote $\overrightarrow{x}=(x_1,x_2,\dots,x_n)$ the string of real numbers such that $i x_j$ are the eigenvalues of $X$. For $B\subset\lu$ we denote with  $\OO(B)$ the coadjoint orbit of $B$ in $\lu$ (for the action of the unitary group $\U$), i.e.
$$
\OO(B)=\{UXU^*: X\in B, \, U\in \U\}.
$$
If $C\in\lu$, we denote $\OO_C=\OO(\{C\})$ for short. We denote with $\co(\Sigma)$  the convex hull of the set $\Sigma$. 
\end{rem} 

With these notations, we can now state the equivalences; see Section \ref{fc} for definitions and discussion of faces of the sphere:

\begin{prop}[Majorization and Finsler norms]\label{majo}
Let $Z,W\in\lu$. The following are equivalent:
\begin{enumerate}
\item $Z\in \co(\OO_W)$, more precisely there exist (at most) $n+1$ matrices  $U_i\in \U$ and $n+1$ real numbers $\lambda_i \ge 0$ with $\sum_i \lambda_i= 1$ such that $Z=\sum_{i=1}^{n+1} \lambda_i\, U_iWU_i^*$.  
\item $\overrightarrow{z}\prec \overrightarrow{w}$ (strong majorization).
\item $\|Z\|\le \|W\|$ for all $\Ad$-invariant Finsler norms in $\lu$.
\item $\max_{U\in \U} (UCU^*|Z)\le \max_{U\in \U} (UCU^*|W)$ for all $C\in \lu$.  
\end{enumerate}
If moreover equality holds for some Finsler norm, then $Z$ and all the $U_iWU_i^*$ lie in the same face of the sphere for that norm (and in fact lie in the intersection of all the faces such that $Z$ lies in). If that norm is strictly convex then $Z=UWU^*$ for some $U\in \U$.
\end{prop}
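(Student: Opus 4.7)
The plan is to establish a cycle (1)$\Rightarrow$(2), (2)$\Rightarrow$(1), (1)$\Rightarrow$(3), (3)$\Rightarrow$(4), (4)$\Rightarrow$(1), leaning on classical Horn--Uhlmann-style arguments adapted to the skew-Hermitian setting, and then to derive the equality clause from the convex geometry of the unit ball. For (1)$\Rightarrow$(2): if $Z=\sum_i \lambda_i U_iWU_i^*$, the eigenvalue vector of $-iZ$ is the diagonal of a convex combination of unitary conjugates of the Hermitian matrix $-iW$; a standard computation then expresses $\overrightarrow{z}$ as the image of $\overrightarrow{w}$ under a doubly stochastic matrix, which is equivalent to $\overrightarrow{z}\prec\overrightarrow{w}$. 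For (2)$\Rightarrow$(1), I would invoke Horn's theorem for Hermitian matrices: majorization gives a doubly stochastic matrix $D$ with $\overrightarrow{z}=D\overrightarrow{w}$, which after simultaneously diagonalising (and using the Birkhoff decomposition of $D$) exhibits $Z$ as a convex combination of unitary conjugates of $W$. The Caratheodory bound $n+1$ follows from Caratheodory's theorem applied to the image of $\OO_W$ under the eigenvalue map, which lives in the affine hyperplane $\{\overrightarrow{x}\in\R^n:\sum_j x_j=\tr(-iW)\}$ of real dimension $n-1$; one lifts back to $\lu$ using the constant trace of the orbit, upper-bounding the number of terms needed.

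For (1)$\Rightarrow$(3), the triangle inequality, positive homogeneity and $\Ad$-invariance give
$$\|Z\|\le \sum_i \lambda_i\|U_iWU_i^*\|=\|W\|\sum_i\lambda_i=\|W\|.$$
For (3)$\Rightarrow$(4) I would use the \textit{orbit seminorm} $N_C(X)=\max_{U\in\U}(UCU^*|X)$, which is $\Ad$-invariant, subadditive and positively homogeneous but possibly degenerate; perturbing it to $N_C+\varepsilon\|\cdot\|_F$ produces a genuine $\Ad$-invariant Finsler norm, so (3) yields $N_C(Z)+\varepsilon\|Z\|_F\le N_C(W)+\varepsilon\|W\|_F$, and letting $\varepsilon\to 0^+$ gives (4). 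For (4)$\Rightarrow$(1): the map $C\mapsto \max_U(UCU^*|X)$ is the support function of the closed convex $\Ad$-invariant set $\co(\OO_X)$ (after using cyclicity to rewrite $(UCU^*|X)=(C|U^*XU)$). Thus (4) says that the support function of $\co(\OO_Z)$ is pointwise dominated by that of $\co(\OO_W)$, and the bipolar theorem yields $\co(\OO_Z)\subseteq \co(\OO_W)$; in particular $Z\in\co(\OO_W)$.

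For the equality clause, suppose $\|Z\|=\|W\|$ for some $\Ad$-invariant Finsler norm $\|\cdot\|$, and write $Z=\sum_i\lambda_i U_iWU_i^*$ as in (1). Each point $U_iWU_i^*$ has norm $\|W\|=\|Z\|$ by $\Ad$-invariance, so $Z$ is written as a convex combination of points on the sphere of radius $\|Z\|$. By the standard face-characterisation of convex bodies (any face meeting the relative interior of a segment contains the whole segment), every $U_iWU_i^*$ must lie in each face of the closed ball that contains $Z$, hence in the intersection of all such faces. If the norm is strictly convex, the only face through a sphere point is the point itself, so $U_iWU_i^*=Z$ for every $i$ and therefore $Z=UWU^*$ for some $U\in\U$.

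The main technical obstacle is the sharp Caratheodory bound $n+1$ in (2)$\Rightarrow$(1), since it requires carefully reconciling the Birkhoff decomposition of $D$ (which a priori involves up to $(n-1)^2+1$ permutation matrices) with the affine-dimensional Caratheodory bound, by exploiting that the full orbit $\OO_W$ lies in a single affine hyperplane. The other steps are essentially convex-analytic and should proceed with the tools already set up in Remark \ref{trduality}.
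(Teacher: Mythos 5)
Most of your cycle is sound and follows the standard Horn--Uhlmann/Schur--Horn route (the paper itself omits the proof, deferring to \cite{larmi}, so there is no written argument to compare against line by line): (1)$\Leftrightarrow$(2) via Schur--Horn and Birkhoff, (1)$\Rightarrow$(3) by subadditivity and $\Ad$-invariance, (4)$\Leftrightarrow$(1) by reading $C\mapsto\max_U(UCU^*|X)=\max_U(C|U^*XU)$ as the support function of $\co(\OO_X)$ and applying the bipolar theorem, and the equality clause by the extreme-face property of supporting hyperplanes. Your Carath\'eodory count is also fine: working in the affine hyperplane $\{\sum_j x_j=-i\tr W\}$ of dimension $n-1$ in fact yields $n$ terms, within the stated bound of $n+1$.

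The genuine gap is in (3)$\Rightarrow$(4). The function $N_C(X)=\max_{U}(UCU^*|X)$ is \emph{not} nonnegative on $\lu$ (only on $\slu$, and only after reducing $C$ modulo scalars, which is exactly why Remark \ref{orbitnorms} and Lemma \ref{twoorbits} work in $\slu$): by Cauchy--Schwarz $N_C(X)\ge-\|C\|_F\|X\|_F$ and this is attained, e.g. $N_{i1}(-i1)=-n$. Hence $N_C+\varepsilon\|\cdot\|_F$ takes negative values for small $\varepsilon$ and is not a Finsler norm in the paper's sense, so hypothesis (3) cannot be applied to it; and if you restore nonnegativity by adding $M\|\cdot\|_F$ with $M\ge\|C\|_F$, you can no longer send the perturbation to $0$. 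This is not a removable technicality, because the implication (3)$\Rightarrow$(4) is false as literally stated: take $W=ip$ with $p$ a rank-one projection and $Z=\tfrac12 W$. Then $\|Z\|=\tfrac12\|W\|\le\|W\|$ for every Finsler norm by positive homogeneity, yet $\overrightarrow{z}\not\prec\overrightarrow{w}$ (the traces differ), $Z\notin\co(\OO_W)$, and (4) fails for $C=-i1$. Conditions (1), (2) and (4) each force $\tr Z=\tr W$, whereas (3) does not --- the remark immediately following the proposition calls the trace equality ``a necessary condition'' precisely for this reason. To close the gap you must either add $\tr Z=\tr W$ as a hypothesis (or work in $\slu$), and then run your argument through the paper's genuine orbit norms $X\mapsto N_C(X)+|\tr X|$ for $0\ne C\in\slu$ (the Corollary after Lemma \ref{twoorbits}): applying (3) to these gives $N_C(Z)\le N_C(W)$ for all $C\in\slu$ once the traces agree, and general $C\in\lu$ follows by splitting off the scalar part of $C$. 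You should flag this trace issue explicitly rather than paper over it.
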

\begin{proof} The proof is similar to that of \cite[Proposition 5.8]{larmi}, with the obvious modifications, and therefore omitted.
\end{proof}

\begin{rem}
It suffices to check condition $(3)$ above only for \textit{strictly convex norms} to obtain the equivalences. This is because any $\Ad$-invariant Finsler norm  $\|\cdot\|$ can be approximated explicitly with a strictly convex ($\Ad$-invariant, Finsler) norm by means of $\|x\|_{\varepsilon}=\|x\|+\varepsilon \|x\|_F$.  On the other hand, it suffices to check $(4)$ for \textit{regular} $C\in \lu$ (that is, all the eigenvalues of $X$ are different), because that is a dense set; if moreover we know a priori that $\tr(Z)=\tr(W)$ (which is a necessary condition), then it suffices to check $(4)$ for regular $C\in \slu$.
\end{rem}

\begin{rem}[Orbit norms in $\slu$]\label{orbitnorms}
If $X\in \slu$, then $\sum_j x_j=-i\tr(X)=0$. This easily implies that all the partial sums $\sum_{k=1}^m x_k$, with the $x_k$ rearranged in decreasing order, must be non-negative. Thus the vector $\overrightarrow{x}$ strongly majorizes the zero vector in $\mathbb R^n$, i.e. $\overrightarrow{0}\prec \overrightarrow{x}$, and by the previous proposition,  $0\in \co(\OO_X)$ for any $X\in \slu$. Then for fixed $C\in \lu$
$$
\max\{(UCU^*|X): U\in \U\}=\max\{-\tr(CU^*XU): U\in \U\}\ge 0
$$
for any $X\in \slu$. For a fixed $C\in \lu$ such that $C\ne \lambda 1$, we define the \textit{orbit norm} in $\slu$  as
$$
\|X\|_{\OO_C}=\max\{(UCU^*|X): U\in \U\}.
$$
Note that if $C=\lambda 1$ then $\|X\|_{\OO_C}=\tr(X)=0$ for all $X\in\slu$, thus the condition is necessary to obtain a Finsler norm. Next we show that it is sufficient. Since $X\in \slu$, we can replace $C$ with $C-\tr(C/n)1$ and obtain the same quantity, thus we will  assumme that $C\in \slu$. 
\end{rem}

Note that if $C$ is a fixed point of the action of the unitary group on $\lu$, then $-C$ is also a fixed point for the action. Morevoer, there  are no fixed points or there are exactly two of the, $C$ and $-C$.

\begin{lem}\label{twoorbits} Let $0\ne C\in \slu$. Then the orbit norm is an $\Ad$-invariant Finsler norm in $\slu$. Moreover:
\begin{enumerate}
\item The norm of $X$ is attained at $UCU^*$ which commutes with $X$, and with the eigenvalues of both arranged in decreasing order.
\item $C$ belongs to the unit sphere of the norm if and only if  $\|C\|_F=1$.
\item In that case, $\|X\|_{\OO_C}=1$ when $X\in \OO_C$, and for $X\in \co(\OO_C)$ we have  $\|X\|_{\OO_C}\le 1$ with equality if and only if $X\in \OO_C$.
\item The norm is fully homogeneous if and only if $\sigma(C)=-\sigma(C)$.
\end{enumerate}
\end{lem}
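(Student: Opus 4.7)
The core tool is Ky Fan's maximum principle (von Neumann's trace inequality). Since $iC$ and $iX$ are Hermitian matrices with eigenvalues the components of $\overrightarrow c$ and $\overrightarrow x$ respectively, and since $(UCU^*|X) = -\tr(UCU^*X) = \tr((iUCU^*)(iX))$, the principle yields the explicit formula
\[
\|X\|_{\OO_C} = \sum_{j=1}^n c_j^\downarrow\, x_j^\downarrow,
\]
where the superscript $\downarrow$ denotes decreasing rearrangement; the maximum is attained precisely when $UCU^*$ commutes with $X$ and their corresponding eigenvalues are in matching (decreasing) order, which is item (1). Positive homogeneity, subadditivity, and $\Ad$-invariance of $\|\cdot\|_{\OO_C}$ are immediate from its definition as a supremum. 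For non-degeneracy, an Abel summation using $\sum x_j = 0$ rewrites the formula as
\[
\|X\|_{\OO_C} = \sum_{k=1}^{n-1}(c_k^\downarrow - c_{k+1}^\downarrow)\bigl(x_1^\downarrow + \cdots + x_k^\downarrow\bigr),
\]
a sum of non-negative terms (the top partial sums of a decreasing sequence summing to zero are $\ge 0$). Because $C\in \slu$ is not scalar, some $c_{k^*}^\downarrow - c_{k^*+1}^\downarrow > 0$, so the corresponding partial sum of $\overrightarrow x^{\,\downarrow}$ must vanish; a short inductive step across the level sets (plateaus) of $\overrightarrow c$, using that the smallest entry in a plateau is $\le 0$ while the largest entry in the next plateau is $\ge 0$ yet bounded above by it, then forces $\overrightarrow x = 0$.

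For item (2), setting $X = C$ in the formula gives $\|C\|_{\OO_C} = \|\overrightarrow c\|_2^{\,2} = \|C\|_F^{\,2}$, so $\|C\|_{\OO_C} = 1$ exactly when $\|C\|_F = 1$. For item (3), the bound $\|X\|_{\OO_C} \le 1$ on $\co(\OO_C)$ is immediate from $\Ad$-invariance and subadditivity applied to a convex combination, with equality on $\OO_C$ itself. To identify the equality case on the full convex hull, I combine Cauchy-Schwarz with Schur-convexity of the Euclidean norm:
\[
\sum c_j^\downarrow x_j^\downarrow \;\le\; \|\overrightarrow c\|_2\,\|\overrightarrow x\|_2 \;\le\; \|\overrightarrow c\|_2^{\,2} = 1,
\]
where the second inequality uses the majorization $\overrightarrow x \prec \overrightarrow c$ from Proposition \ref{majo}. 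Saturating both simultaneously forces $\overrightarrow x^{\,\downarrow} = \overrightarrow c^{\,\downarrow}$, so $X$ and $C$ share spectrum and $X \in \OO_C$.

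Item (4) is a symmetry analysis. If $\sigma(C) = -\sigma(C)$, then there exists $W \in \U$ with $WCW^* = -C$, and the change of variable $U \mapsto UW^{-1}$ in the defining supremum yields $\|-X\|_{\OO_C} = \|X\|_{\OO_C}$ for every $X$. Conversely, evaluating at $X = C$ under the hypothesis of full homogeneity, the Ky Fan formula and Cauchy-Schwarz yield
\[
\|\overrightarrow c\|_2^{\,2} = \|-C\|_{\OO_C} = \sum_j c_j^\downarrow (-c)_j^\downarrow \;\le\; \|\overrightarrow c\|_2\,\|-\overrightarrow c\|_2 = \|\overrightarrow c\|_2^{\,2},
\]
so the Cauchy-Schwarz inequality is saturated and $(-\overrightarrow c)^\downarrow = \overrightarrow c^{\,\downarrow}$ (the proportionality constant equals $+1$ because the two vectors have equal Euclidean norm and non-negative inner product), which is exactly $\sigma(C) = -\sigma(C)$. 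The principal obstacle in the whole argument is non-degeneracy: one cannot bootstrap it from Proposition \ref{majo}, since that proposition presupposes a supply of $\Ad$-invariant Finsler norms. Once the plateau-based argument is in place, the remaining items unfold as direct consequences of the Ky Fan formula combined with Cauchy-Schwarz and Schur-convexity.
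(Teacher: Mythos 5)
Your proof is correct and takes a genuinely different route from the paper's. The paper establishes item (1) by a first-order variational argument ($f(t)=(e^{tZ}UCU^*e^{-tZ}\,|\,X)$ has a critical point at $t=0$, forcing commutation, then invokes the rearrangement inequality), and proves non-degeneracy by a Hahn--Banach separation combined with a Haar-measure averaging argument showing $0$ is an interior point of $\co(\OO_C)$; items (2)--(4) are then derived through matrix-level Cauchy--Schwarz inequalities for the trace inner product, with (4) handled by an induction on blocks of one-dimensional projections. Your approach instead extracts the explicit rearrangement formula $\|X\|_{\OO_C}=\sum_j c_j^\downarrow x_j^\downarrow$ up front via von Neumann's trace inequality, which delivers item (1) for free and, after Abel summation, turns non-degeneracy into a finite combinatorial check on the partial sums of $\overrightarrow{x}^{\,\downarrow}$ (in fact a single strict breakpoint $c_{k}^\downarrow>c_{k+1}^\downarrow$ already forces $\overrightarrow{x}=0$, so the ``induction across plateaus'' you mention is more than is needed). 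Items (2)--(4) then reduce to Cauchy--Schwarz and Schur-convexity in $\mathbb R^n$ rather than at the matrix level, which is cleaner and makes the equality cases transparent. What your route buys is concreteness: the norm becomes an explicit functional of the ordered spectra, and all equality cases are read off from the saturation of a scalar Cauchy--Schwarz inequality. One small caveat: the remark that Proposition \ref{majo} ``presupposes a supply of $\Ad$-invariant Finsler norms'' is only partly right --- the equivalence $(1)\Leftrightarrow(2)$ there is the Horn--Uhlmann theorem and is independent of Finsler norms, which is precisely why your appeal to it in item (3) is legitimate; it is only the equivalence with condition $(3)$ of that proposition that would be circular.
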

\begin{proof}
By the previous discussion, the orbit norm is non-negative, and it is clear that it is positively homogeneous. It is also clear that the triangle inequality holds and that it is $\Ad$-invariant. So we have to check that it is non-degenerate. Let $\overrightarrow{c}$ denote the string of eigenvalues of $-iC$. Since $C$ is not a multiple of the identity map, and $\tr(C)=0$, it is well-known that there exist $n-1$ permutations $\sigma_k\in S_n$ such that $\{\sigma_k(\overrightarrow{c})\}_{k=1,\dots,n-1}$ spans $\{v\in\mathbb R^n:\sum_{i=1}^n v_i=0\}$ in $\mathbb R^n$ (see for instance \cite{hema}). Hence the orbit of $C$ spans $\slu$. We know that $0\in \co(\OO_C)$, we claim that it is an interior point. If that is not the case, we use the argument in \cite[Lemma 6]{bgh}: there exist by the Hahn-Banach separation theorem a unit norm functional $\varphi$ such that $\varphi(0)=0$ and $\varphi(\co(\OO_C))\ge 0$. Then $\varphi(UCU^*)\ge 0$ for all $U\in \U$. Let $dU$ be the normalized Haar measure in $\U$, let $W=\int_{\U}UCU^*dU$. Then $W\in \slu$ and $\Ad_U W=W$ for all $U\in\U$, and this is only possible if $W=\lambda 1$, thus $W=0$. Now 
$$
0=\varphi(W)=\int_{\U}\varphi(UCU^*)dU\ge 0
$$
and this is only possible if $\varphi(UCU^*)=0$ for all $U$. But this implies that $\co(\OO_C)\subset\ker\varphi$, a contradiction. Now let $X\ne 0$ in $\slu$, shrinking $X$ we obtain an element of the interior of the convex capsule of the orbit of $C$, hence there exists $\lambda>0$ such that $\lambda X=\sum_i \lambda_i U_i CU_i^*$. Assume that $\|X\|_{\OO_C}=0$, then $(UCU^*|X)\le 0$ for all $U\in \U$. But on the other hand
$$
0<\lambda \|X\|_F^2 =\sum_i \lambda_i (U_iCU_i^*|X)\le 0,
$$
a contradiction. Hence it must be $\|X\|_{\OO_C}\ne 0$, and this finishes the  proof of the fact that the orbit norm is a true Finsler norm in $\slu$. Let $Z\in\slu$, let $U$ such that $(UCU^*|X)=\|X\|_{\OO_C}$, then $f(t)=(e^{tZ}UCU^*e^{-tZ}|X)$ has a maximum at $t=0$, hence
$$
0=f'(0)=([Z,UCU^*]|X)=(Z|[UCU*,X])
$$ 
by (\ref{cyclic}), hence $[UCU^*,X]=0$, and we can diagonalize both simultaneously hence $(UCU^*|X)=\sum_i c_{\sigma(i)}x_i$ for some permutation $\sigma\in S_n$. But any such sum is dominated by the sum with both strings of coefficients in decreasing order (see e.g. \cite[Corollary II.4.4]{bhatia}). This proves claim 1. For the normalization condition 2., note that
$$
-\tr(UCU^*C)=\tr(UCU^*C^*)\le \|UCU^*\|_F \|C\|_F=\|C\|_F^2=-\tr(C^2)
$$
by the Cauchy-Schwarz inequality for the trace inner product. Hence $\|C\|_{\OO_C}\le -\tr(C^2)$ and on the other hand by picking $U=1$ we also have $\|C\|_{\OO_C}\ge -\tr(C^2)$. This shows that $\|C\|_{\OO_C}=-\tr(C^2)=\|C\|_F^2$. So let us fix $C\in\slu$ such that $\|C\|_F=1$. If $X\in \OO_C$, then from the definition of the norm we have  $\|X\|_{\OO_C}=\|C\|_{\OO_C}=1$, and if $X\in \co(\OO_C)$ it is then apparent that $\|X\|_{\OO_C}\le 1$. Assume that $X$ is in the convex hull of the orbit and $X$ has unit norm, i.e.  $X=\sum_i \lambda_i U_iCU_i^*$ and there exists $U_0$ such that $(U_0C U_0^*|X)=\|X\|_{\OO}=1$. Then $
-CU_0^*XU_0=\sum_i \lambda_i (-C)\tilde{U_i}C\tilde{U_i}^*$, and taking trace and using the Cauchy-Schwarz inequality for the trace we have 
$$
1=(C|U_0^*XU_0)= \sum_i \lambda_i |(C|\tilde{U_i}C\tilde{U_i}^*)\le \sum_i \lambda_i\,\|C\|_F^2=1.
$$
Therefore it must be $(C|\tilde{U_i}C\tilde{U_i}^*)=\|C\|_F\|\tilde{U_i}C\tilde{U_i}^*\|_F$ for each $i$, and the equality for the Cauchy-Schwarz inequality for the trace is attained if and only if there exists $t_i\ge 0$ such that $C=t_i \tilde{U_i}C\tilde{U_i}^*$. By the equality that must hold for each $i$, and since $\|\tilde{U_i}C\tilde{U_i}^*\|_F=\|C\|_F=1$, it must be $t_i=1$ for each $i$. Thus $C$ commutes with $\tilde{U_i}$ for each $i$, and $U_0^*XU_0=\sum_i \lambda_i \tilde{U_i}C\tilde{U_i}^*=C$, which shows that $X=U_0CU_0^*\in \OO_C$, finishing the proof of 3. Now let us assume that the spectrum of $C$ is balanced, then if $X\in\slu$ pick $U$ such $(UCU^*|X)=\|X\|_{\OO_C}$ and $U_0$ such  $U_0CU_0^*=-C$. Then
$$
\|-X\|_{\OO_C}\ge (UU_0CU_0^*U^*|-X)=(UCU^*|X)=\|X\|_{\OO_C}.
$$
Argumenting with $-X$ we obtain the opposite inequality. Finally, if the spectrum of $C$ is not balanced, then $-C\notin \OO_C$ (otherwise $-C=UCU^*$ is a contradiction); let us asumme that $\|-X\|_{\OO_C}=\|X\|_{\OO_C}$ for any $X\in \slu$, then if $p$ is any one-dimensional projection we have $c_1=\|ip\|_{\OO_C}=\|-ip\|_{\OO_C}=-c_n$. Now let $p_1,p_2$ be orthogonal one-dimensional projections, then by assertion 1. of this lemma we have
$$
c_1+c_2=\|i(p_1+p_2)\|_{\OO_C}=\|-i(p_1+p_2)\|_{\OO_C}=-c_{n-1}-c_n,
$$
thus $c_2=-c_{n-1}$. We proceed in this fashion until we end up with $c_{k+1}=0$ (for $n=2k+1$ odd), or until we end up with $c_k=-c_{k+1}$ (for $n=2k$ even). In either case, $\sigma(C)=-\sigma(C)$.
\end{proof}

Even though the natural setting for the orbit norms is $\slu$, we can extend them to $\lu$ in several $\Ad$-invariant ways, for instace:

\begin{coro}[Orbit norms in $\lu$] Let $0\ne C\in \slu$, then 
$$
\|X\|_{\OO_C}=\max\{(UCU^*|X): U\in \U\}+|\tr X|
$$
is an $\Ad$-invariant Finsler norm in $\lu$, and it is fully homogeneous if and only if the spectrum of $C$ is balanced. 
\end{coro}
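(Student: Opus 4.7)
The plan is to reduce the statement to the previously established Lemma \ref{twoorbits} by decomposing any $X\in\lu$ as $X = X_0 + \tfrac{\tr X}{n}\cdot 1$ with $X_0 \in \slu$. Since $C \in \slu$ we have $\tr(UCU^*) = 0$ for every $U \in \U$, so
$$(UCU^* \,|\, X) = -\tr(UCU^*X_0) - \tfrac{\tr X}{n}\tr(UCU^*) = (UCU^* \,|\, X_0).$$
Therefore the maximum term in the definition of $\|X\|_{\OO_C}$ coincides with the orbit norm of $X_0$ on $\slu$ introduced in Remark \ref{orbitnorms}, and the full expression decouples as $\|X\|_{\OO_C} = \|X_0\|_{\OO_C} + |\tr X|$. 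Every property demanded by the corollary will be read off this formula together with the previous lemma.

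Next I would verify the Finsler norm axioms directly from this decomposition. Non-negativity and positive homogeneity are immediate, since both summands are non-negative and positively homogeneous of degree one. Subadditivity holds because $X \mapsto X_0$ and $X \mapsto \tr X$ are $\mathbb{R}$-linear, and both the orbit norm on $\slu$ and the absolute value are subadditive. For $\Ad$-invariance, conjugation preserves the trace, so $(UXU^*)_0 = UX_0U^*$ and $\tr(UXU^*) = \tr X$, whence both summands are invariant. Non-degeneracy is the slightly more delicate point: if $\|X\|_{\OO_C} = 0$, both non-negative terms must vanish, giving $\tr X = 0$ and $\|X_0\|_{\OO_C} = 0$; the latter forces $X_0 = 0$ by Lemma \ref{twoorbits}, and hence $X = 0$.

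For the characterization of full homogeneity, it suffices to decide when $\|-X\|_{\OO_C} = \|X\|_{\OO_C}$ holds for every $X \in \lu$. Since $|{-\tr X}| = |\tr X|$ always, the trace contribution is automatically balanced, so full homogeneity on $\lu$ is equivalent to the same property for the orbit norm on $\slu$. By item 4 of Lemma \ref{twoorbits}, this occurs precisely when $\sigma(C) = -\sigma(C)$. I do not expect any substantive obstacle: the whole corollary follows from the orthogonality, in the trace inner product, between scalar matrices and $\slu$, combined with the prior lemma; recognizing this decoupling is essentially the entire content of the argument.
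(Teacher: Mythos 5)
Your proof is correct and follows the same route as the paper: both reduce the statement to Lemma \ref{twoorbits} via the decomposition $X=X_0+\tfrac{\tr X}{n}1$ with $X_0\in\slu$, using that $(UCU^*|X)=(UCU^*|X_0)$ because $C$ is traceless. You merely spell out the steps the paper declares ``plain'' (the axioms and the decoupling $\|X\|_{\OO_C}=\|X_0\|_{\OO_C}+|\tr X|$) and supply the omitted full-homogeneity argument, which is exactly the intended one.
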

\begin{proof}
It is plain that it is non-negative and positively homogeneous; the triangle inequality is also easy to establish, and so is the invariance. Now if the result is zero, since both terms are non-negative,  it must be $X-\tr(X/n)1=0$ and $\tr X=0$ by the previous lemma, hence $X=0$. The last assertion has a simple proof using the previous lemma, and is therefore omitted.
\end{proof}

\begin{rem}Let $\overrightarrow{z},\overrightarrow{w}$ denote the strings of eigenvalues of $-iZ,-iW$ with $Z,W\in\lu$. Combining Proposition \ref{majo} and Lemma \ref{orbitnorms}, we have $\|Z\|\le \|W\|$ for all Finsler $\Ad$-invariant norms if and only if $\tr(Z)=\tr(W)$ and
$$
\sum_i c_i^{\downarrow}\;z_i^{\downarrow}\le \sum_i c_i^{\downarrow}\; w_i^{\downarrow}
$$
for any string  $\overrightarrow{c}$ such that $\sum_i c_i=0$ and all the $c_i$ are different (the down arrow indicates that the eigenvalues are ordered decreasingly).
\end{rem}

\medskip

We now discuss the presentation of a Finsler norm as the Minkowski norm of the coadjoint orbit of a symmetric set:


\begin{rem}\label{masaobs} Let $\MM\subset \lu$ be a maximal abelian associative subalgebra (m.a.s.a.), then there exist an orthonormal basis $B$ such that $\MM=\{A\in\lu: A \hspace{0.2cm}\text{is diagonal in the base B}\}$.   All such algebras are conjugated by a unitary matrix. For any given $\mathfrak M$, consider the set of $n$  one-dimensional projections $(p_k)_{k=1}^n$ generating $\mathfrak M$. If we identify $i p_k$ with $e_k$ (the canonical vectors in $\mathbb R^n$), then we  identify as before any diagonal matrix $A=i\sum a_k p_k\in\lu$ with a vector in $\mathbb R^n$, $\vec{a}=(a_1,\cdots,a_n)$. With the Frobenius norm in $\lu$, this identification is isometric if we put in $\mathbb R^n$ the canonical inner product.
\end{rem}

\begin{defi} A set $B$ around $0\in\mathbb R^n$ is \textit{symmetric} if for every permutation $\sigma\in S_n$ and $(x_1,x_2, \cdots ,x_n)\in B$, we have $(x_{\sigma(1)},x_{\sigma(2)}, \cdots ,x_{\sigma(n)})\in B$.  If $F:\mathbb R^n\to\mathbb R$, we say that $F$ is \textit{symmetric} if $F\circ\sigma=F$ where $\sigma$ should be interpreted as a permutation of the variables. Note that $F$ is symmetric if and only if the level sets of $F$ are symmetric. 
\end{defi}

\begin{prop}\label{bolas}
Let $\|\cdot\|$ be an Ad-invariant Finsler norm in $\lu$, and $B_1$ it's open unit ball. Then
\begin{enumerate}
\item For every m.a.s.a. $\MM \subseteq \lu$, let $B_{\MM}=B_1\cap \MM$. Then  $B_{\MM}$ is a symmetric open convex set around $0\in \R^n$ (with the identification of the previous remark), not depending on the chosen algebra. 
\item Let $B\subset\R^n$ be an open convex symmetric set around $0$, and 
$$B_1=\co(\OO(B))=\{\sum_i \lambda_iU_iXU_i^* :X\in B, U_i\in\U, \lambda_i\geq 0, \sum_i\lambda_i=1\}.$$
Then $B_1$ is an open, Ad-invariant convex set around $0\in \lu$, and for every m.a.s.a $\MM$ we have  $B_1\cap \MM=B$ (using the identification of $\MM$ with $\R^n$). 
\end{enumerate}
\end{prop}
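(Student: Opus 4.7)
The plan is to prove both parts by combining routine convex analysis (Minkowski gauges, $\Ad$-invariance) with the classical Schur theorem relating the diagonal of a normal matrix to its spectrum.

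For part (1), the four required properties of $B_\MM = B_1 \cap \MM$ are mostly straightforward. Openness and convexity follow from intersecting the open convex set $B_1$ with the linear subspace $\MM$; $0$ belongs to $B_\MM$ since $B_1$ is a neighborhood of the origin. For the symmetry, each permutation $\sigma \in S_n$ is implemented on $\MM$ by conjugation with a permutation matrix $P_\sigma \in \U$, so the $\Ad$-invariance of $B_1$ transports to the $S_n$-invariance of $B_\MM$ in $\R^n$. For independence of the chosen m.a.s.a., any two such algebras are $\U$-conjugate, and the $\Ad$-invariance of $B_1$ equates the two intersections up to a change of orthonormal basis, which is absorbed by the symmetry just established.

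For part (2), I would first note that convexity is built into the definition of $B_1$ and that $\Ad$-invariance follows from $V(\sum_i \lambda_i U_i X_i U_i^*) V^* = \sum_i \lambda_i (VU_i) X_i (VU_i)^*$. To see that $B_1$ is a neighborhood of $0$ in $\lu$, I would diagonalize: any $Y \in \lu$ of small Frobenius norm is unitarily similar to a small diagonal $D \in \MM$, and since $B$ is open around $0$ this gives $D \in B$, hence $Y \in \OO(B) \subseteq B_1$. For openness of $B_1$, my plan is to work with the Minkowski gauge $\rho(X) = \inf\{t > 0 : X \in t B_1\}$, which is a well-defined sublinear functional on $\lu$ because $B_1$ is absorbing, and is continuous since it is bounded by $1$ on the neighborhood $B_1$ of $0$. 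The key point is that every $X \in B_1$ satisfies $\rho(X) < 1$: from a finite representation $X = \sum_i \lambda_i U_i X_i U_i^*$ with $X_i \in B$, the openness of $B$ provides some $s > 1$ with $s X_i \in B$ for each of the finitely many indices $i$, so $s X \in B_1$ and $\rho(X) \leq 1/s < 1$. The reverse inclusion $\{\rho < 1\} \subseteq B_1$ follows from the convex combination $X = t(X/t) + (1-t)\cdot 0$ with $t \in (\rho(X), 1)$; hence $B_1 = \{\rho < 1\}$ is open.

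The main obstacle is the identity $B_1 \cap \MM = B$. The inclusion $B \subseteq B_1 \cap \MM$ is immediate from the identification of $B \subset \R^n$ with a subset of $\MM$. For the reverse inclusion, I would take $X \in B_1 \cap \MM$ together with a representation $X = \sum_i \lambda_i U_i X_i U_i^*$ with $X_i \in B$, and let $E: \lu \to \MM$ denote the orthogonal projection with respect to the Hilbert-Schmidt inner product (the conditional expectation onto the diagonal algebra). Since $X \in \MM$, applying $E$ gives $X = \sum_i \lambda_i E(U_i X_i U_i^*)$. Each $U_i X_i U_i^*$ is skew-Hermitian with the same spectrum as $X_i$, so by Schur's theorem its diagonal in the fixed basis is majorized by the spectrum of $X_i$; equivalently, by Birkhoff-Hardy-Littlewood-Polya, the vector $E(U_i X_i U_i^*) \in \R^n$ lies in the convex hull of the permutations of $X_i$. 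Because $B$ is symmetric and convex, this convex hull is contained in $B$, so $E(U_i X_i U_i^*) \in B$ for each $i$; therefore $X$ is a convex combination of elements of $B$ and lies in $B$.
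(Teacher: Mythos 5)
Your proposal is correct and follows essentially the same route as the paper: both parts hinge on the fact that conjugating between m.a.s.a.'s (resp.\ acting by permutation matrices) implements the unitary group on $\R^n$, and the crucial inclusion $B_1\cap\MM\subseteq B$ is obtained in both cases from Schur's majorization theorem applied to the block-diagonal (pinching) of $U_iX_iU_i^*$ combined with the symmetry and convexity of $B$. The only differences are cosmetic: you spell out the openness of $B_1$ via the Minkowski gauge where the paper simply asserts it, and you apply the majorization argument term-by-term (allowing distinct $X_i$), which is slightly cleaner than the paper's reduction to a single $X\in B$.
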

\begin{proof}
Since $B_1$ is open and convex,  then $B_{\MM}=B_1\cap \MM\subset \MM \simeq \R^n$ is an open convex set. Let $\MM_1$ be another m.a.s.a., then there exists a unitary matrix $U$ such that $U\MM U^*=\MM_1$. Since
$$
B_{\MM_1}=B_1\cap \MM_1=UB_1U^*\cap U\MM U^*=U(B_1\cap \MM) U^*= UB_\MM U^*,
$$
this shows that $B_\MM$ and $B_{\MM_1}$ (when looked as subsets of $\mathbb R^n$) are the same set. The set is clearly symmetric, since we can act with $\U$ by permutations. This proves the first assertion. Now let $B\subset\R^n$ be an open convex symmetric set around $0$, and present it as a subset of a m.a.s.a. $\MM$. From the definition of $B_1$ we see that it is open, convex, $\Ad$-invariant and contains the $0$ matrix. We only need to prove that $B_1\cap \MM_1=B$, since then for any given m.a.s.a. $\MM_1=U\MM U^*$ we will have 
$$
B_1\cap \MM_1=U(B_1\cap \MM)U^*=UBU^*,
$$
hence $B$ and $UBU^*$ are in fact the same set when we make use of our identification with $\mathbb R^n$.  So let $X\in B$, then $UXU^*\in B_1$ for all $U\in \U$, and  if we choose $U=1$ it follows that $X\in B_1$. On the other hand we choose $\MM$ such that $B\subset \MM$, and then $B\subset B_1\cap \MM_1$.  Since $B$ is a convex set, in order to prove that $B_1\cap\MM\subset B$ we need to show that if $Z=\sum\limits_i\lambda_iU_iXU_i^*\in B_1\cap \MM$.
with $X\in B$, $U_i\in \U$, $\lambda_i\ge 0$ and $\sum\lambda_i=1$ then $Z\in B$. We have $Z=i\sum z_k p_k$. Let $X^i=diag(U_iXU_i^*)=\sum_k p_k U_iXU_i^*p_k$ be the diagonal of $U_iXU_i^*$ with respect ot the given m.a.s.a. By Schur-Horn's theorem, each $X^i$ is majorized by $X$ (see \cite[Theorem 1.3]{ando}), or equivalently, there exists a doubly stochastic matrix such that $\vec{X^i}=A^i\vec{X}$. Now since $Z$ is diagonal,
$$
Z=i\sum z_k p_k=\sum_k p_kZp_k=\sum_i \lambda_i \sum_k p_kU_iXU_i^*p_k=\sum_i\lambda_i X^i.
$$
Let $A=\sum_i \lambda_i A^i$, then it is easy to check that $A$ is doubly stochastic, and since $\vec{Z}=\sum_i \lambda_i \vec{X^i}=\sum_i \lambda_i A^i\vec{X}=A\vec{X}$, we conclude that $\vec{Z}$ is majorized by $\vec{X}$. But then $\vec{Z}$ is a convex combination of coordinate permutations of $\vec{X}$, which shows that $Z\in B$.
\end{proof}

\begin{rem} If we start with $B\subset \R^n$ an open, symmetric set around $0$, then $B_1$ is an open, convex and Ad-invariant set of $\lu$. Thus, by Remark \ref{minkowsky}, it defines a Finsler norm in $\lu$. This fact and the last proposition imply that every $\Ad$-invariant Finsler norm in $\lu$ is obtained in this fashion.  Clearly, the same assertions hold for $\slu$, replacing $\mathbb R^n$ with the hyperplane $\sum_{i=1}^n x_i=0$.
\end{rem}

\begin{ejem}\label{ejemnor} We present here a couple of elementary examples that we will use later to ilustrate smootheness and duality:
\noindent 
\begin{enumerate}[wide=0pt]
\item\label{ejem1} \textit{The twisted Ellipse}: consider the (twisted) planar ellipse given by 
$$
\frac{1}{a^2}(x+y)^2+\frac{1}{b^2}(x-y)^2\le 1.
$$
This ellipse is a symmetric convex body with smooth (and strictly convex) boundary. By Proposition \ref{bolas}, the convex hull of the unitary orbit of this set is the unit ball of an $\Ad$-invariant Finsler norm in $\lu$, for $n=2$. The Minkowski functional of this convex body is given by
$$
F(x,y)=\sqrt{\frac{1}{a^2}(x+y)^2+\frac{1}{b^2}(x-y)^2},
$$
which is a Finsler norm in $\mathbb R^2$ (and therefore in $\lu$ for $n=2$): if $ix_k$ are the eigenvalues of $X$,  then $\|X\|=F(x_1,x_2)$.
\item \textit{The Toast}: consider the line segments $y=x+1$, joining the points $(-1,0)$ and $(0,1)$, then the line segment $y=x-1$ joining the points $(0,-1)$ and $(1,0)$, and the line segment $y=-x-1$ joining the points $(-1,0)$ and $(0,-1)$. Close this box with the circumference
\begin{equation}\label{toast}
(x-\frac{1}{2})^2+(y-\frac{1}{2})^2=\frac{1}{2},
\end{equation}
joining the points $(1,0)$ and $(0,1)$. The area enclosed by this curves is a symmetric convex body in $\mathbb{R}^2$, a toast. The formula for the norm (the Minkowski functional of this convex body) is apparent if $(x,y)$ belongs to the second, the third and the fourth quadrant: it is given by the $1$-norm: $\|(x,y)\|=|x|+|y|$. On the other hand, if $x,y\ge 0$, we can rewrite equation (\ref{toast}) as $x^2+y^2\le x+y$. Therefore in the first quadrant the Finsler norm is given by
$$
\|(x,y)\|=\frac{x^2+y^2}{x+y}.
$$
The point $(1,0)$ where the circumference joins (smoothly) the segment is an extreme point of the sphere but there is no linear functional supporting only that point.
\end{enumerate}
\end{ejem}

\subsection{Faces of the sphere, extreme points and dual norm}\label{fc}

Let $\overline{B_1}$ be the closed unit ball of the norm. Being a compact convex set it is by the Krein-Milman theorem the convex hull of its extreme points.

\begin{defi}[Extreme points, face and the cone generated by a face]\label{caras}
Recall $0\ne X\in \lu$ is \textit{extreme} if $X/\|X\|$ is an extreme point of $\overline{B_1}$. Note that the norm is strictly convex if and only if all the non-zero vectors are extreme. A \textit{face} $F$ of the ball $B_r$ of the normed space $(\lu,\|\cdot\|)$ is the intersection of the closed ball $\overline{B_r}$ with the hyperplane determined by a unit norm functional $\varphi\in \lu', \|\varphi\|=1$, i.e.
$$
F_{\varphi}(r)=\overline{B_r}\cap\{X\in\lu:\varphi(X)=r\}.
$$
We will usually omit the number $r$ and $F_{\varphi}$ will refer to the face containing a certain vector $V$, thus $r=\|V\|$. We say that the face is \textit{maximal} if $\varphi$ is an extreme element of the dual space. Every face is contained in a maximal face: since the closed ball of the dual norm is compact and convex, there exists by the Krein-Milman theorem extreme functionals $\{\varphi_i\}_{i=1,\dots,n}$ of unit norm such that $\varphi$ is a convex combination of the $\varphi_i$
$$
\varphi=\sum_i \lambda_i \varphi_i,  \quad \lambda_i\ge 0,\quad \sum_i \lambda_i=1.
$$
It is then easy to check that if $\varphi(X)=\|X\|$ then $\varphi_i(X)=\|X\|$ for all $i$. Therefore if $X\in F_{\varphi}$, then $X\in F_{\varphi_i}$ for all $i$, and in fact $F_{\varphi}$ is the intersection of all the maximal faces that contain it. The \textit{cone generated} by a face $F_{\varphi}$ is $C_{\varphi}=\mathbb R_+ F_{\varphi}$. This cone consists exactly of those $X\in \lu$ such that $\varphi(X)=\|X\|$ for this given unit norm $\varphi$.
\end{defi}

Note that for each $\lambda\in\mathbb R_{>0}$, if $\varphi(V)=\|V\|$ and $\|\varphi\|=1$ we have $
\varphi(\lambda V)=\lambda \|V\|=\|\lambda V\|$ thus $\varphi$ norms the whole ray $\lambda V$, $\lambda >0$. We say that $\varphi$ is a \textit{norming functional for $X$} if $\|\varphi\|=1$ and $\varphi(X)=\|X\|$.

\begin{defi}[Dual norm]\label{dualnorm} The duality of Lemma \ref{trduality} also allows us to define the \textit{dual norm} of $V\in \lu$ as
$$
\|V\|'=\max\{(V|X): \|X\|\le 1\},
$$
and it is easy to check that this norm is also $\Ad$-invariant, and if $V\in\lu$ and $\varphi$ is the functional defined by $V$ as in (\ref{normi}), then $\|\varphi\|=\|V\|'$.
\end{defi}

\begin{rem} If $\|V\|=1$, for each norming $\varphi$ of $V$ we have $N\in \lu$ of unit \textit{dual norm}, i.e. $\|N\|'=1$, such that $\varphi(V)=(N|V)=1$. But then if $\varphi'=(V|\cdot)$, it is easy to check that since $\|V\|=1$, this functional is of unit norm in the double dual space. On the other hand $\varphi'(N)=(V|N)=\|V\|=1=\|N\|'$, therefore $\varphi'$ norms $N$ in the dual space. Let $F_{\varphi'}\subset \lu$ stand for the face of the sphere for the dual norm, given by the functional $\varphi'$. Clearly $\varphi=(N|\cdot)$ is the unique functional supporting $V$ if and only if $F_{\varphi'}=\{N\}$, and also $F_{\varphi}=\{V\}$ if and only if $\varphi'=(V|\cdot)$ is the unique functional supporting $N$ in the dual space.
\end{rem}


\begin{rem}[Convexity and smoothness]\label{convesmooth} It is straightforward from the definitions that the double dual norm equals the original norm. On the other hand, the following properties will be useful:
\begin{enumerate}
\item The norm $\|\cdot\|$ is G\^{a}teaux differentiable at $x\ne 0$ if and only if it is Fr\'echet differentiable  (this follows from \v{S}mulian Lemma, see \cite[Lemma 8.4]{fabian} for instance). Therefore we simply say that the norm is \textit{smooth} when this happens for any $x\ne 0$; in that case the norm function is in fact $C^1$ away from $x=0$ \cite[Corollary 8.5]{fabian}.
\item The norm is smooth if and only if the dual norm is strictly convex if and only if there is a unique functional supporting each $x\ne 0$ \cite[Lemma 8.4 and Fact 8.12]{fabian}.
\item The norm is strictly convex if and only if it is uniformly convex (this follows from the previous properties and the fact that double dual norm equals the original norm, or it can be proven directly from the compacity of the unit sphere).
\item If the norm is smooth at $X\ne 0$, the  gradient of the norm $F$ at $X$ (with respect to the trace inner product) returns $N\in \lu$ such that $\varphi=(N|\cdot)$ is the unique norming functional for $X$. This is apparent from 
$$
t^{-1}(\|X+tV\|-\|X\|)\le \|V\| \qquad \forall \, t>0,
$$
which implies $(\nabla F(X)|V)\le \|V\|$ for all $V$, and $(\nabla F(X)|X)=\|X\|$. In particular when the norm is smooth, $X\mapsto \nabla F(X)$ is a bijection from $\partial B_1$ onto $\partial B_1'$.
\item In any case, since the norm is a convex function,  the subdifferentials of the norm at $X\ne 0$, completely characterize the norming functionals of $X$ (see \cite[Section 2]{watson} and Theorem \ref{normderivative} below).
\end{enumerate}
See also \cite{fan} for further general properties of a spehere in a Banach space.
\end{rem}



\begin{rem}[Polar duals] Let $E$ be a convex subset in an inner product space $(V, (\cdot |\cdot )\,)$. The \textit{polar dual} of $E$ is the set
$$
E^{\circ}=\{Y\in V:  (Y|X)\le 1\quad \forall\, X\in E\}.
$$
It is plain that $E\subseteq (E^{\circ})^{\circ}$, in fact we have that $(E^{\circ})^{\circ}$ equals to the closure of the convex hull of $E\cup\{0\}$. If $E$ is closed and $0\in E$, since we are assumming that $E$ is convex, we have that $E=(E^{\circ})^{\circ}$ as a consequence of the Hahn-Banach theorem. For further discussion see \cite{jensen} and the references therein.  

Let $E=B_1$ be the unit ball of a Finsler norm $\|\cdot\|$ in $\lu$, let $B=B_1\cap \mathfrak M\subset\mathbb R^n$ be the $n$-dimensional convex set that generates $B_1$ (Proposition \ref{bolas}). It is apparent that the norm $\|\cdot\|$ is strictly convex  if and only if the boundary set $\partial B$ is strictly convex  in $\mathbb R^n$ (for details, see for instance \cite[Theorem 3.1]{zietak}. Then by duality and using Proposition \ref{bolas}, we have that the norm in $\lu$ is smooth if and only if $\partial B$ is smooth in $\mathbb R^n$. 
\end{rem}

\begin{defi}[Self-duals] By the very definition of dual norm in $\lu$, if $B_1'$ is the unit ball of the dual norm, then we have $B_1'=B_1^{\circ}\subset \lu$ with respect to the trace inner product in $\lu$. Moreover $B_1'=\co( B^{\circ})$ by the same proposition, where we take the polar dual of $B\subset\mathbb R^n$ with respect to the standard inner product there. It is not hard to check that the only self-polar closed convex body around $0$ is the unit ball of the Euclidean norm. But since we are interested in the geometry of the ball, we can allow rotations and dilations/contractions. So we will say that the norm is \textit{self-dual} if the polar dual of its unit ball is homothetic to a rotation of the original unit ball.
\end{defi}

\begin{ejem} We now discuss some dual norms and the geometry of their unit spheres. By the previous remark, it suffices to study the polar dual of the $n$-dimensional convex set $B\subset \mathbb R^n$ obtained by cutting the unit ball $B_1$ of the Finsler norm with any m.a.s.a. Being convex sets, they behave well with respect to the interior and closure operations, so we abuse notations a bit and use $B,B_1$ to indicate the closures of the balls also.
\begin{enumerate}
\item It is well-known that the dual of the $p$-norms are the $q$ norms with $q=1-1/p$, including the case $q=\infty$ when $p=1$ and vice-versa. For $1<p<\infty$, the unique $\varphi=(N|\cdot)$ norming $X=i\sum_k x_kp_k$ is given by 
$$
N=\frac{1}{\|X\|_p^{p-1}}\sum_k sg(x_k)x_k^{p-1}
$$
\item The unique self-polar norm is the Frobenius norm $(p=2)$. The $1$-norm and the $\infty$-norm are self-dual only for $n=2$, and the $p$-norms for $p\ne 1,2$ are not self-dual. 
\item From  Remark \ref{convesmooth}.4, a straightforward computation shows that the unit sphere of the dual norm of the twisted ellipse in Example \ref{ejemnor}, is yet another twisted ellipse, now with radii $2/a,2/b$. These norms are smooth and uniformly convex.
\item The dual unit ball of the toast in Example \ref{ejemnor} is obtained in a similar fashion: dualizing the straight diagonal segments, we obtain the three points $(-1,1),(-1,-1),(1,-1)$, the first one joined with the second with a vertical segment, and the second joined to the third one with an horizontal segment. Then $(-1,1)$ must be joined with $(1,-1)$ with the polar dual of the circumference; by computing its gradient we can see that it is the curve parametrized by $\alpha(t)=2(\cos t+\sin t +\sqrt{2})^{-1}(\cos t,\sin t)$ in the interval $t\in [-\pi/4,3\pi/4]$ (which is not a circumference).
\item The dual norm of the Ky-Fan norm $k$ is given by $\|X\|_{(k)}'=\max\left\{\frac{1}{k}\|X\|_1,\|X\|_{\infty}\right\}$, as shown in  \cite[Theorem 1]{watson1}. The norming functionals for the Ky-Fan norms $\|\cdot\|_{(k)}$ were completely characterized by Watson in \cite{watson1,watson} using the subgradient of the norm (Remark \ref{convesmooth}.5).
\item We point out one distinguished norming functional for the Ky-Fan norm $k$: let $X=i\sum_j x_j p_j\in \lu$ and write $X=\Omega |X|=|X|\Omega$ the polar decomposition of $X$. We can assume that $\Omega$ is unitary by taking 
$$
\Omega=i\sum_j sg(x_j)p_j\qquad |X|=\sum_j |x_j|p_j
$$
where $sg(x_j)=1$ if $x_j\ge 0$ and $sg(x_j)=-1$ if $x_j<0$. Then let $U$ be a unitary matrix rearranging the singular values of $|X|$ in decreasing order i.e. $U|X|U^*=\sum_j |x_j|^{\downarrow} p_j$ and  let $P=\sum_{j=1}^k p_j$. Consider $\varphi=(N|\cdot)$, with $N=-i \mathrm{Re}(U^*PU\Omega^*)\in \lu$. Then it is easy to check that $\varphi(X)=\|X\|_{(k)}$. On the other hand $\|\mathrm{Re} Z\|_p\le \|Z\|_p$ for any $p$ and any matrix $Z$, thus
$$
\|N\|_1\le \|U^*PU\Omega^*\|_1=\|P\|_1=k \qquad \|N\|_{\infty}\le \|U^*PU\Omega^*\|_{\infty}=\|P\|_{\infty}=1.
$$
By the previous remark we have $\|N\|_{(k)}'\le 1$, thus $\varphi$ is norming for $X$.
\end{enumerate}
\end{ejem}

Since the orbit norms have many interesting properties (which give good intuition for further general results on any norm, see the next section) we present their properties as a separate example. The natural setting is that of $\slu$, so we restrict to it:

\begin{ejem}[Orbit norms and their duals] The dual of the orbit norm of $C$ in  $\slu$ is the norm of the convex hull of the orbit, i.e. $B_1^{\circ}=B_1'=\co(\OO_C)$ (this is apparent from  from the definitions). Equivalently $B_1=\co(\OO_C)^{\circ}$, and in particular the maximum can be taken over the orbit or over the convex hull of the orbit:
$$
\|X\|_{\OO_C}=\max\limits_{V\in \OO_C}(X|V)=\max\limits_{Z\in \co(\OO_C)}(X|Z)\quad \textrm{ for any }X\in\slu.
$$
Another properties follow: 
\begin{enumerate}
\item By Schur-Horn's theorem, the extreme points of $B_1'\cap \mathfrak M$ are the vectors obtained by permutations of $ \overrightarrow{c}$ (as before, $\MM$ is any m.a.s.a).
\item The dual norm is not necessarily an orbit norm (equivalently, the unit ball of $\|\cdot\|_{\OO_C}$ is not necessarily the convex hull of a unitary orbit). An easy example of this follows: let $C=\diag(1,0,-1)$, then the convex hull of the orbit of $C$ in the hyperplane $x+y+z=0$ of $\mathbb R^3$ is a regular hexagon, this convex hull is $B_1'\cap \MM$, the unit ball of the dual norm sliced with the m.a.s.a. It is easy to see that the normal vectors to each of the sides of the hexagon are the vectors $(1,1,-2), (2,-1,-1)$ and its permutations. Hence the unit ball $B_1$ of the orbit norm $\|\cdot\|_{\OO_C}$ is the regular hexagon generated by these six points, but one cannot get these six points as the orbit of any given point. 
\item If $C$ is regular and balanced as in the previous example, then the orbit is a regular $n!$-agon, hence its polar dual is also a regular $n!$-agon. Hence in this case the norm is self-dual.
\item If $C$ is regular but not balanced, one obtains non-regular $n!$-agons (and the norm is not  self-dual). 
\item If $C$ is not regular, one obtains $k$-agons for certain divisors of $n!$. For instance if $C=(1,1,-2)$, the orbit of $C$ generates a regular triangle, hence its polar dual (the unit ball of the orbit norm) is also a regular triangle; hence this norm is self-dual.
\item With the normalization $\|C\|_F=1$ we have $\co(\OO_C)\subset B_1$ (Lemma \ref{twoorbits}), hence $B_1'\subset B_1$ thus $\|X\|_{\OO_C}'\ge \|X\|_{\OO_C}$ for any $X\in\slu$.
\item With this normalization, one has $\OO_C=\partial B_1\cap \partial B_1^{\circ}$. To prove this, recall that the unit ball of the dual norm is $\co(\OO_C)$, so we also have $\|C\|_{\OO_C}'\le 1$, thus $\|C\|_{\OO_C}'=1$. This tells us that the orbit $\OO_C$ is in the intersection of the unit sphere and the unit sphere of the polar dual of the ball. Now if $\|X\|_{\OO_C}=\|X\|_{\OO_C}'=1$ in particular $\|X\|_{\OO_C}'\le 1$ hence $X\in\co(\OO_C)$. Since $\|X\|_{\OO_C}=1$, we have $X\in \OO_C$ by Lemma \ref{twoorbits}.3.
\item With the above normalization, for any $U\in\U$, we have $\varphi=(UCU^*|\cdot)$ a unit norm functional for the orbit norm $\|\cdot\|_{\OO_C}$, since $\|UCU^*\|'=\|C\|'=1$. For any $X\ne 0$ there exists $U_0$ such that $(U_0CU_0^*|X)=\|X\|_{\OO_C}$, hence $\varphi=(U_0CU_0^*|\,\cdot\,)$ is norming for $X$. 
\end{enumerate}
\end{ejem}


\subsection{Norming functionals, the fine detail}\label{fb}

We now discuss more relevant properties of the matrix representing a norming functional for a fixed Finsler norm. 

\begin{lem}\label{diss}
Let $\|\cdot\|$ be an $\Ad$-invariant Finsler norm in $\lu$, let $0\ne X\in \lu$, let $\varphi=(N|\cdot)$ be a norming functional for $X$ or for $-X$. For $Y\in\lu$ we have that
\begin{enumerate}
\item  $\varphi([X,Y])=0$ (equivalently $\varphi \circ \ad X\equiv 0$ on $\lu$), and $N$ commutes with $X$.
\item If $\varphi$ norms $X$, then $\varphi([Y,[Y,X]])\le 0$.
\end{enumerate}
\end{lem}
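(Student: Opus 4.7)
The approach is to consider, for an arbitrary $Y\in\lu$, the smooth orbit curve
\[\alpha(t)=e^{tY}Xe^{-tY},\]
which by $\Ad$-invariance stays on the sphere of radius $\|X\|$ for every $t\in\mathbb R$. A direct differentiation gives $\alpha'(0)=[Y,X]$ and $\alpha''(0)=[Y,[Y,X]]$, so the scalar function $f(t):=\varphi(\alpha(t))$ has $f'(0)=\varphi([Y,X])$ and $f''(0)=\varphi([Y,[Y,X]])$.

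For part (1), pick $\varepsilon\in\{\pm 1\}$ so that $\varphi$ norms $\varepsilon X$, and consider instead $h(t):=\varphi(\varepsilon\alpha(t))=\varepsilon f(t)$. Since $\varepsilon\alpha(t)=e^{tY}(\varepsilon X)e^{-tY}$ lies on the sphere of radius $\|\varepsilon X\|$, we have $h(t)\le \|\varepsilon X\|=h(0)$, so $t=0$ is a global maximum of $h$. Hence $0=h'(0)=\varepsilon\,\varphi([Y,X])$, which gives $\varphi([Y,X])=0$; since $Y\in\lu$ was arbitrary and $[X,Y]=-[Y,X]$, we obtain $\varphi\circ\ad X\equiv 0$ on $\lu$. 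To extract the commutation relation, I would invoke the cyclic identity (\ref{cyclic}):
\[(Y\,|\,[N,X])=(N\,|\,[X,Y])=\varphi([X,Y])=0 \qquad \text{for every } Y\in\lu.\]
Because $[N,X]\in\lu$ (commutators of skew-Hermitian matrices are skew-Hermitian) and the trace inner product is non-degenerate on $\lu$, this forces $[N,X]=0$.

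For part (2), when $\varphi$ norms $X$ itself we can take $\varepsilon=1$, so $f(t)=\varphi(\alpha(t))$ attains a global maximum at $t=0$, and the second-order necessary condition $f''(0)\le 0$ yields exactly $\varphi([Y,[Y,X]])\le 0$.

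I do not anticipate a genuine obstacle: the entire argument rests on the orbit curve lying on a fixed sphere, together with $\varphi$ being extremal on that sphere. The only subtlety worth flagging is the asymmetry of part (2) in the sign of $\varphi$: had $\varphi$ normed $-X$ rather than $X$, the same argument would turn $t=0$ into a global minimum and the inequality would reverse. This is why the hypothesis of (2) is strictly stronger than that of (1), and the dissipativity conclusion $\varphi([Y,[Y,X]])\le 0$ is intrinsically sign-sensitive.
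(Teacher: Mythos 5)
Your proof is correct and is essentially the paper's own argument: the paper expands $e^{s\ad Y}X$ to second order and uses that $\varphi(e^{s\ad Y}X)\le\|X\|=\varphi(X)$ to extract $\varphi([Y,X])=0$ and $\varphi([Y,[Y,X]])\le 0$, then derives $[N,X]=0$ from the cyclic trace identities exactly as you do. Your phrasing via the first- and second-order conditions at the maximum of $t\mapsto\varphi(e^{tY}Xe^{-tY})$ is just a repackaging of the same computation, and your remark on the sign-sensitivity of part (2) is accurate.
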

\begin{proof}
Consider the expansion in $\lu$ 
\begin{align}
\Ad_{e^{sY}}X& =e^{s \ad  Y}X=X+s \, \ad  Y(X)+\frac{s^2}{2}[Y,[Y,X]]+ o(s^3),\label{expande2}
\end{align}
which gives us first: $e^{s \ad  Y} X-X=s[Y,X]+o(s^2)$. Then for each  norming functional $\varphi$ of $X$
$$
\varphi(e^{s\ad Y}X)-\varphi(X)\le \|e^{s\ad Y}X\|-\|X\|= \|e^{sY}Xe^{-sY}\|-\|X\|= 0.
$$
Divide by $s>0$ and make $s\to 0^+$, to obtain $\varphi([Y,X])\le 0$. Replacing $Y$ with $-Y$ proves that $\varphi([Y,X])=0$. The proof when $-\varphi(X)=\|-X\|$ is similar and therefore omitted.  Then for all $Z\in \lu$ we have 
$$
0=\varphi([X,Z])=-\tr(N[X,Z])=(N|[X,Z])=(Z|[N,X])
$$
were we used the cyclic identities (\ref{cyclic}). Taking $Z=[N,X]$ we obtain $\|[N,X]\|_F^2=0$, therefore $[N,X]=0$. Now by (\ref{expande2})
$$
o(s^3)+\frac{s^2}{2}\varphi([Y,[Y,X]])+0+\varphi(x)=\varphi(e^{s \ad Y}X)\le \|e^{s \ad Y}X\|=\|X\|=\varphi(X),
$$
therefore $o(s^3)+\frac{s^2}{2}\varphi([Y,[Y,X]])\le 0$. Dividing by $s^2$ and letting $s\to 0$ proves the second assertion. 
\end{proof}

\begin{rem}
Since $g: s\mapsto \varphi(e^{s\ad Y}X)$ has a maximum at $s=0$, then one possibility is that all the derivatives vanish at $s=0$; this can only happen if $\varphi(e^{s\ad Y})\equiv g(0)=\|X\|$ since $g$ is a real analytic function. If that is not the case, then all the expression $\varphi((\ad Y)^k(X))=0$ up to some finite \textit{even} term which must be strictly negative, i.e. $\varphi((\ad Y)^{2n}(X))<0$, and all the previous powers vanish.
\end{rem}

\begin{defi}[Diagonal/codiagonal decomposition] Let $0\ne V\in \lu$, write $V=i\sum_{k\in F} v_k P_k$ with $v_1>v_2>\dots >v_F$ a diagonalization of $V$ where the eigenvalues are different and ordered decreasingly. Each $P_k$ is an orthonormal projection onto the corresponding eigenspace and $P_kP_j=0$ for $k\ne j$. For any $X\in \lu$ we can decompose $X=X_D+X_C$, where
$$
X_D=\sum_k P_kXP_k\quad \textrm{ and  } \; X_C=\sum_{k\ne j}P_kXP_j.
$$
the \textit{block-diagonal and block-codiagonal} parts of $X$. A word of caution: it might easily be that $[X_D,X_C]\ne 0$. Note however that
$$
X=X_D+X_C,\qquad [X,V]=[X_C,V]\quad \textrm{ and }\; [X_D,V]=0.
$$
Let $\varphi=(N|\cdot)=-\tr(N\cdot)$ be a norming functional for $V$. Since $N$ commutes with $V$ (Lemma \ref{diss}), we have $N=i\sum_{k\in F}N_k$ with $N_k$ self-adjoint and $N_kN_j=0$ for $k\ne j$ ($N$ is block-diagonal). Next we extend the characterization of norming functionals stated for the orbit norms (Lemma \ref{twoorbits}.1) to any $\Ad$-invariant Finsler norm:
\end{defi}

\begin{prop}[Characterization of norming functionals]\label{ordenados} Consider $0\ne V=i\sum_{k\in F} v_k P_k$, and let $N=i\sum_k N_k$ with $\varphi=(N|\cdot)$ a norming functional for $V$, let $F_{\varphi}$ the face given by $\varphi$. Then
\begin{enumerate}
\item If $j>k$, any eigenvalue of $N_k$ is greater or equal than all the eigenvalues of $N_j$.
\item If a pair of these are equal then permuting the induced one-dimensional projections gives another $\tilde{V}\in F_{\varphi}$.
\item If $F_{\varphi}=\{V\}$, then in fact the inequality stated in the first item is always strict.
\item For given $s\in F$, exchanging the one-dimensional projections of $N_s$ corresponding to different eigenvalues, gives another $\phi$ norming $V$.
\item If $\varphi$ is the unique functional norming $V$, then $N$ is diagonal, i.e. $N_k=\sum_{k\in F} n_k P_k$, with $n_{k+1}\ge n_k$. 
\end{enumerate} 
In particular if the norm is strictly convex and smooth, then it must be $N=\sum_{k\in F} n_k P_k$ with the $n_k$ strictly decreasing.
\end{prop}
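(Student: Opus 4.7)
The plan is to derive all five assertions from a single swap construction that transposes the eigenvalues of $V$ attached to two orthogonal rank-one projections sitting in distinct spectral subspaces of $V$. Since $[N,V]=0$ by Lemma \ref{diss}, each $N_k$ is self-adjoint on $\ran(P_k)$ and can be diagonalized there. I would fix $j > k$ in $F$, pick a unit eigenvector of $N_k$ inside $\ran(P_k)$ with eigenvalue $a$ and one of $N_j$ inside $\ran(P_j)$ with eigenvalue $b$, and let $p, q$ be the corresponding rank-one (hence orthogonal) projections. Then set
$$
\tilde V = V + i(v_j - v_k)(p - q),
$$
which merely interchanges the eigenvalues $iv_k$ and $iv_j$ assigned to these two directions, so $\sigma(\tilde V) = \sigma(V)$ and therefore $\|\tilde V\| = \|V\|$. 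A short calculation using the cyclic identities (\ref{cyclic}) together with $N_k p = a p$ and $N_j q = b q$ yields the master identity
$$
\varphi(\tilde V) - \varphi(V) = (v_j - v_k)(a - b).
$$

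From this formula alone, assertions (1)--(3) are immediate. Since $j > k$ makes $v_j - v_k < 0$ and $\|\varphi\| = 1$ forbids $\varphi(\tilde V) > \|\tilde V\| = \|V\|$, we must have $a \ge b$, which is (1). If $a = b$ the right-hand side vanishes, giving $\varphi(\tilde V) = \|\tilde V\|$ so $\tilde V \in F_\varphi$, which is (2). Conversely, under the hypothesis $F_\varphi = \{V\}$ one cannot produce a distinct $\tilde V$ in the face, so equality in (1) is excluded and (3) follows.

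For (4) and (5) I would invoke Ad-invariance of the dual norm: any block-diagonal unitary $W = \sum_\ell W_\ell$ (with $W_\ell$ unitary on $\ran(P_\ell)$) commutes with $V$, so the functional $\psi = (W^*NW \mid \cdot)$ satisfies $\psi(V) = \varphi(WVW^*) = \varphi(V)$ and $\|\psi\|' = \|N\|' = 1$, hence norms $V$ as well. Claim (4) is the special case where $W$ is the identity off $\ran(P_s)$ and a permutation of two one-dimensional spectral projections of $N_s$ on $\ran(P_s)$. For (5), uniqueness of $\varphi$ forces $W^*NW = N$ for every such $W$, so each $N_s$ commutes with all unitaries on $\ran(P_s)$ and must therefore be a scalar multiple $n_s P_s$ by Schur's lemma; combined with (1) this yields the asserted ordering of the $n_k$. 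The closing sentence follows because strict convexity rules out non-trivial faces (a segment in the unit sphere would contradict extremality of its endpoints), so $F_\varphi = \{V\}$ and (3) applies, while smoothness gives a unique norming functional by Remark \ref{convesmooth}.2, so (5) applies.

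The main delicate point is conceptual rather than computational: the right perturbations here are \emph{finite} modifications that preserve $\sigma(V)$ exactly, rather than the infinitesimal deformations of the kind used in Lemma \ref{diss}. Only because $\|\tilde V\| = \|V\|$ holds with equality does the Hahn--Banach bound $\varphi \le \|\cdot\|$ become sharp enough to reveal the monotonicity among eigenvalues of the $N_k$ across different blocks, and it is precisely $[N,V] = 0$ that permits choosing $p, q$ as simultaneous eigenvectors of $N$ inside distinct $V$-eigenspaces.
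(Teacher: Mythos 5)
Your proposal is correct and follows essentially the same route as the paper: both prove (1)--(3) by transposing a pair of $N$-eigenvectors lying in distinct spectral subspaces of $V$ (your additive perturbation $\tilde V = V + i(v_j-v_k)(p-q)$ is exactly the conjugate $UVU^*$ by the transposition unitary used in the paper) and comparing $\varphi(\tilde V)$ with $\|\tilde V\|=\|V\|$, and both obtain (4)--(5) from block-diagonal unitaries commuting with $V$ together with Ad-invariance of the dual norm. Your "master identity" and the explicit appeal to Schur's lemma in (5) are only presentational variants of the paper's trace computation.
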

\begin{proof}
We have 
$\|V\|=\varphi(V)=(N|V)=-\tr(NV)=-\tr(i\sum_s N_s\, i\,\sum_l v_l P_l)=\sum_s v_s\tr(N_s)$. Now if $U\in \U$, then 
\begin{equation}\label{menor}
-\tr(U^*NUV)=-\tr(NUVU^*)=\varphi(UVU^*)\le \|UVU^*\|=\|V\|=-\tr(NV).
\end{equation}
Note also that if $\tilde{V}=UVU^*$ then $\varphi(\tilde{V})\le \|\tilde{V}\|=\|V\|$. Now choose an orthonormal basis $\{e_1^s,\dots,e_{d(s)}^s\}$ of $\ran(P_s)$ diagonalizing each $N_s$ (here $d(s)=dim(P_s)$) i.e. we have 
$$
N_s e_l^s= \lambda_l(N_s)e_l^s\qquad l=1,\dots,d(s)
$$
for each $s\in F$; we can further assume that the $\lambda_l(N_s)$ are in non-increasing order. Since $\sum_s P_s=1$, if we put these basis all together in the given order, we obtain a basis that diagonalizes $N$ and $V$ simultaneously. Let $U\in \U$ be the unitary matrix exchanging $e_{d(k)}^k$ with $e_1^j$ (the last eigenvector of $N_k$ with the first eigenvector of $N_j$), with $j>k$. Then
$$
\|V\|=\varphi(V)=-\tr(NV)=\sum_{s\in F}v_s \sum_{l=1}^{d(s)}\lambda_l(N_s),
$$
while $-\tr(U^*NUV)=-\tr(NUVU^*)$ is the same sum, but with the two mentioned eigenvalues of $N$ exchanged. By means of (\ref{menor}), after cancelling all the terms that remain equal, we obtain
$$
v_j\lambda_{d(k)}(N_k)+v_k\lambda_1(N_j)\le v_k\lambda_{d(k)}(N_k)+v_j\lambda_1(N_j).
$$
From here we obtain $(v_k-v_j)(\lambda_{d(k)}(N_k)-\lambda_1(N_j))\ge 0$, and since $v_k>v_j$ when $j>k$, the conclusion of the first assertion follows. Note that if $\lambda_{d(k)}(N_k)=\lambda_1(N_j)$ then in fact we have 
$$
\varphi(\tilde{V})=-\tr(NUVU^*)=-\tr(U^*NUV)=-\tr(NV)=\varphi(V)=\|V\|=\|UVU^*\|=\|\tilde{V}\|,
$$
therefore $\tilde{V}\in F_{\varphi}$, proving the second assertion. If this face is a singleton, and the equality  $\lambda_{d(k)}(N_k)=\lambda_1(N_j)$ holds, then $UVU^*=\tilde{V}=V$ but this is impossible since $v_k\ne v_j$. Therefore in that case we must have strict inequality, and this proves the third assertion. Now if we have a permutation of two different eigenprojections of $N_k$, say by means of a unitary $U$, then $\ran(UN_kU^*)=\ran(N_k)=\ran(P_k)$ and $U$ is the identity in the other blocks, hence if $\phi=(UNU^*|\cdot)$ we have
\begin{align*}
\phi(V)& =\sum_{s\ne k} v_s\tr(N_s)+ \sum_l v_l\tr(UN_kU^*P_l)=\sum_{s\ne k} v_s\tr(N_s)+ v_k\tr(UN_kU^*P_k)\\
&=\sum_{s\ne k} v_s\tr(N_s)+ v_k\tr(N_k)=(N|V)=\varphi(V)=\|V\|.
\end{align*}
On the other hand for any $Z\in \lu$ 
$$
\phi(Z)=-\tr(UNU^*Z)=-\tr(NU^*ZU)=(N|UZU^*)\le \|N\|' \|UZU^*\|=\|\varphi\| \|Z\|= \|Z\|,
$$
which combined with the previous equality shows that $\|\phi\|=1$ and $\phi$ is norming for $V$. Finally, if there is exactly one functional norming $V$, then the eigenvalues of each $N_s$ must be equal thus $N_s=n_s P_s$ for each $s$.
\end{proof}

\begin{rem}
Since $N$ commutes with $V$, and any rearranging of the eigenvectors inside each block $N_k$ is a priori admissible, if necessary we may assume that the eigenvalues of $N$ are all ordered decreasingly. On the other hand, it is clear that if $\lambda_{d(k)}(N_k)$ is the smallest eigenvalue of $N_k$, then 
$$
N_k\ge \lambda_{d(k)}(N_k)P_k.
$$
Likewise, if $\lambda_1(N_j)$ is the greatest eigenvalue of $N_j$ then $N_j\le \lambda_1(N_j)P_j$, 
and we recently proved that $\lambda_{d(k)}(N_k)\ge \lambda_1(N_j)$ when $j>k$. Both inequalities here are with respect of the partial order of matrices,
$$
X\ge Y\quad \textrm{ iff }\quad \langle X\xi,\xi\rangle\ge \langle Y\xi,\xi\rangle\quad \forall\; \xi\in \mathbb C^n
$$
\end{rem}

\begin{rem}
If $[X,[X,Y]]=0$ or $[Y,[Y,X]]=0$, then $[X,Y]=0$, because
$$
0=(Y|0)=(Y|[X,[X,Y])=([Y,X]|[X,Y])=-\|[X,Y]\|_F^2,
$$
and then $[X,Y]=0$. For the Frobenius norm (the unique $\Ad$-invariant norm that comes from an inner product), the unique norming functional norming $0\ne V\in \lu$ is given by  $\varphi=(\frac{V}{\|V\|_F}|\;\cdot\;)$, thus $N_V=\frac{V}{\|V\|_F}$, therefore by the previous computation, in the setting of the Frobenius norm
$$
\varphi_V([X,[X,V]])=0
$$
can only happen if $[X,V]=0$. In what follows we are going to explore this condition for other $\Ad$-invariant norms. In particular we will prove that the previous assertion holds true for any strictly convex norm.
\end{rem}

\begin{rem}\label{conmu}If $X=X_D+X_C$ is the decomposition of $X$ in its diagonal and codiagonal parts with respect to $V=i\sum_k v_k P_k$, with $v_1>v_2>\dots >v_F$ as before, then  $[X,V]=[X_C,V]$ since $X_D$ commutes with $V$ (each block of $V$ is a multiple of the identity submatrix).  Then if $\varphi=(N|\cdot)$ norms $V$, then  by  the Jacobi identity and  (\ref{cyclic}) we have
\begin{align*}
\varphi([X,[X,V]])&=\varphi([X,[X_C,V]]) = (N\,|\,[[X,X_C],V] +[X_C,[X,V]])\\
&=-(N\,|\,[V,[X,X_C]]) +(N\,|\, [X_C,[X_C,V]])\\
&=-([N,V]\,|\,[X,X_C]) +([N,X_C]\,|\,[X_C,V])=([N,X_C]\,|\,[X_C,V])
\end{align*}
since $N$ commutes with $V$, that is
\begin{equation}\label{nuc}
\varphi([X,[X,V]])=([N,X_C]\,|\,[X_C,V]).
\end{equation}
\end{rem}

\begin{teo}[The case of equality]\label{teoN}
Let $X,V\in\lu$, write $V=i\sum_k v_k P_k$ with $v_1>\dots>v_F$ and let $\varphi=(N|\,\cdot)$ be any norming functional for $V$. Then $\varphi([X,[X,V]])=0$ if and only if $[X_C,N]=0$. Moreover, if $N$ is diagonal with respect to $V$ (in particular if $\varphi$ is the unique functional norming $V$) then $[X,N]=0$. On the other hand if $F_{\varphi}=\{V\}$, then $[X,V]=0$.
\end{teo}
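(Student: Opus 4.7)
The plan is to use the identity derived in Remark \ref{conmu},
\[
\varphi([X,[X,V]]) = ([N,X_C]\,|\,[X_C,V]),
\]
and analyze this Hilbert--Schmidt pairing in the block decomposition induced by $V$. Writing $V = i\sum_k v_kP_k$, $N = i\sum_k N_k$ (with $N_k$ self-adjoint on $\ran P_k$), and $X_C = \sum_{a\neq b}A_{ab}$ with $A_{ab} = P_aXP_b$ and $A_{ba} = -A_{ab}^*$, a direct expansion of the two commutators plus a cyclic-trace argument (using that $A_{ab}A_{cd}=0$ unless $b=c$, and that the trace of such a product survives only when $d=a$) should give, after symmetrizing the $(a,b)$ and $(b,a)$ contributions,
\[
\varphi([X,[X,V]]) = 2\sum_{a<b}(v_b-v_a)\bigl[\tr(N_aM_{ab}) - \tr(N_bM'_{ab})\bigr],
\]
where $M_{ab}=A_{ab}A_{ab}^*\geq 0$ on $\ran P_a$ and $M'_{ab}=A_{ab}^*A_{ab}\geq 0$ on $\ran P_b$.

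The key structural input is Proposition \ref{ordenados}(1): for $a<b$ every eigenvalue of $N_a$ dominates every eigenvalue of $N_b$. Choosing orthonormal eigenbases $N_a|e_\mu\rangle=\alpha_\mu|e_\mu\rangle$ and $N_b|f_\nu\rangle=\beta_\nu|f_\nu\rangle$ and setting $c_{\mu\nu}=\langle e_\mu|A_{ab}|f_\nu\rangle$, the inner bracket collapses to the diagonal expression
\[
\tr(N_aM_{ab})-\tr(N_bM'_{ab}) = \sum_{\mu,\nu}(\alpha_\mu-\beta_\nu)|c_{\mu\nu}|^2 \geq 0,
\]
so that each summand of the pairing is $\leq 0$ (recovering Lemma \ref{diss}(2) a posteriori), and equality holds if and only if $(\alpha_\mu-\beta_\nu)c_{\mu\nu}=0$ for all $\mu,\nu$. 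This last condition is precisely $N_aA_{ab}-A_{ab}N_b=0$, i.e., the vanishing of the $(a,b)$-block of $[N,X_C]$; taking adjoints and using $A_{ba}=-A_{ab}^*$ together with Hermiticity of $N_a,N_b$ forces the $(b,a)$-block simultaneously, proving the equivalence $\varphi([X,[X,V]])=0 \Longleftrightarrow [X_C,N]=0$.

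The remaining two assertions follow with little extra effort. If $N$ is diagonal with respect to $V$, i.e.\ $N_k=n_kP_k$, then on each $V$-eigenblock $N$ acts as a scalar so $[X_D,N]=0$ is automatic, and combining this with the first assertion gives $[X,N]=[X_C,N]=0$. If instead $F_\varphi=\{V\}$, Proposition \ref{ordenados}(3) upgrades the eigenvalue inequality to a strict one, $\alpha_\mu>\beta_\nu$ for $a<b$, and then $\varphi([X,[X,V]])=0$ forces $c_{\mu\nu}=0$ for every $\mu,\nu$ and every $a\neq b$, so that $X_C=0$, i.e.\ $[X,V]=0$. The main obstacle is purely bookkeeping in the first step: tracking the signs introduced by $A_{ba}=-A_{ab}^*$ and the factors of $i$ in $N=i\sum N_k$, $V=i\sum v_kP_k$, and correctly symmetrizing the sum over unordered pairs $\{a,b\}$; once the displayed diagonal expression $\sum(\alpha_\mu-\beta_\nu)|c_{\mu\nu}|^2$ is reached, everything else is immediate.
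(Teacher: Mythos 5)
Your proposal is correct, and the heart of it --- the analysis of the equality case --- follows a genuinely different route from the paper's. Both arguments start from the identity $\varphi([X,[X,V]])=([N,X_C]\,|\,[X_C,V])$ of Remark \ref{conmu} and both lean on Proposition \ref{ordenados} (item (1) for the eigenvalue ordering between blocks, item (3) for the strict case when $F_\varphi=\{V\}$). But where the paper first tests the pairing against the rank-restricted perturbations $Z=P_jXP_k+P_kXP_j$ to isolate the scalar conditions $\tr(N_jXP_kX)=\tr(N_kXP_jX)$, and then extracts the block commutation relations $P_kXN_j=\lambda P_kXP_j$ through operator inequalities ($N_k\ge\lambda_{d(k)}(N_k)P_k$, $N_j\le\lambda_1(N_j)P_j$), Hahn decompositions, a sign case-split on $\lambda$, and several Cauchy--Schwarz equality arguments, you diagonalize $N_a$ and $N_b$ inside their blocks and write the entire pairing as the explicitly signed scalar sum $2\sum_{a<b}(v_b-v_a)\sum_{\mu,\nu}(\alpha_\mu-\beta_\nu)|c_{\mu\nu}|^2$. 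I checked your expansion (including the signs coming from $A_{ba}=-A_{ab}^*$ and the factors of $i$): it is exactly right, and the $a\leftrightarrow b$ symmetry does produce the factor $2$. From that formula the non-positivity of each summand, the equivalence of vanishing with $N_aA_{ab}=A_{ab}N_b$ (i.e.\ $[X_C,N]=0$, the $(b,a)$-blocks following by adjoints), the case of diagonal $N$, and the strict case $X_C=0$ all drop out at once. What your approach buys is a substantially shorter and more transparent equality analysis that also re-derives Lemma \ref{diss}(2) for free; what the paper's buys is a formulation in terms of operator inequalities that avoids choosing eigenbases inside the blocks. The only thing left to your reader is the bookkeeping you flag yourself, and it does close.
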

\begin{proof}
assume that $[X_C,N]=0$, then by (\ref{nuc}) we have $\varphi([X,[X,V]])=([N,X_C]\,|\,[X_C,V])=0$. 
Now assume that $\varphi([X,[X,V]])=0$, first take $Z=P_jXP_k+P_kXP_j\in \lu$, and note that $Z=Z_C$ if $k\ne j$. Then by Propositon \ref{diss} and the identity (\ref{nuc}), we have 
\begin{align}
\tr([Z,N][Z,V]) &=-\tr([N,Z][Z,V])=([N,Z]\,|\,[Z,V])\nonumber \\
&=([N,Z_C]\,|\,[Z_C,V])=\varphi([Z,[Z,V]])\le 0 \label{esta}.
\end{align}
We now compute
\begin{align}
[Z,N]&=i(P_jXP_k+P_kXP_j)\sum_s N_s -i\sum_s N_s (P_jXP_k+P_kXP_j)\\
& =i( P_jXN_k+P_kXN_j -N_jXP_k-N_kXP_j).
\end{align}
Likewise 
$$
[Z,V]=i(P_jX v_kP_k + P_kXv_jP_j-v_jP_j X P_k-v_kP_k XP_j)=i( v_k-v_j)( P_jXP_k-P_kXP_j).
$$
Then
\begin{align*}
\tr([Z,N][Z,V]) &  = -( v_k-v_j)\tr(( P_jXN_k+P_kXN_j -N_jXP_k-N_kXP_j)( P_jXP_k-P_kXP_j))\\
& =-(v_k-v_j)\tr(-P_jXN_kXP_j+P_kXN_jXP_k+N_jXP_kXP_j-N_kXP_jXP_k)\\
& = -2(v_k-v_j)\tr(N_jXP_kX-N_kXP_jX).
\end{align*}
Thus by (\ref{esta}) we have
\begin{equation}\label{menorigual}
2(v_k-v_j)\tr(N_jXP_kX-N_kXP_jX)\ge 0\qquad \forall\; k\ne j.
\end{equation}
We now compute
\begin{align*}
[X_C,N]&=i\sum_{r\ne s}(P_rXP_s+P_sXP_r)\sum_l N_l -i\sum_l N_l \sum_{r\ne s}(P_rXP_s+P_sXP_r)\\
&=i\sum_{r\ne s}(P_rXN_s+P_sXN_r) -i\sum_{r\ne s}(N_rXP_s+N_sXP_r)\\
&=i \sum_{r\ne s}P_rXN_s+P_sXN_r -N_rXP_s-N_sXP_r.
\end{align*}
Likewise,
\begin{align*}
[X_C,V] & =i(\sum_{j\ne j}P_kXv_jP_j+P_jXv_kP_k -v_kP_k XP_j-v_jP_j XP_k)\\
& =i\sum_{k\ne j}(v_k-v_j)(P_jXP_k-P_kXP_j).
\end{align*}
Then $\varphi([X,[X,V]]) =\tr([X_C,N][X_C,V])$ equals
\begin{align*}
& =-\sum_{j\ne k}\sum_{m\ne n,r\ne s}\tr((P_rXN_s+P_sXN_r -N_rXP_s-N_sXP_r)(v_k-v_j)(P_jXP_k-P_kXP_j))\\
& = -4\sum_{k\ne j}(v_k-v_j)\tr(N_jXP_kX-N_kXP_jX).
\end{align*}
By inequality (\ref{menorigual}), this can only be zero if $\tr(N_jXP_kX-N_kXP_jX)=0$ for all $k\ne j$. Assume then that the sum is zero, and consider first the case of $j>k$. Then by Lemma \ref{menorigual}, $\lambda_{d(k)}(N_k)\ge \lambda_1(N_j)$. We are going to show that these numbers are in fact equal and that 
\begin{equation}\label{done}
P_kXP_jN=P_kXN_j=\lambda_1(N_j) P_kXP_j,  \qquad P_jXP_kN=P_j XN_k=\lambda_1(N_j) P_jXP_k.
\end{equation}
Taking adjoints we will also obtain $NP_jXP_k= \lambda_1(N_j) P_jXP_k$, $NP_k XP_j=N_k XP_j=\lambda_1(N_j) P_kXP_j$,  thus for $k>j$ we get 
$$
P_jXP_k N=P_j XN_k=\lambda_1(N_j) P_jXP_k,\quad N P_jXP_k =N_j XP_k=\lambda_1(N_j) P_jXP_k,
$$
i.e. $[P_jXP_k,N]=0$. By symmetry, the same argument will work for $j<k$ (this time with $\lambda_1(N_k)$ instead of $\lambda_1(N_j)$), and then we will have proved the statement of the theorem, $[X_C,N]=0$.

For the purpose of simplifying the  changes of signs in the computation, it will be convenient to write $X=i\widetilde{X}$ with $\widetilde{X}^*=\widetilde{X}$, and in fact we will drop the tilde, abusing a bit the notation (note that  $X_C$ commutes with $N$ if and only if $\widetilde{X}_C$ commutes with $N$). So let us assume that $j>k$, and we start from the data $\tr(N_jXP_kX)=\tr(N_kXP_jX)$ with $X^*=X$ self-adjoint. Then since $N_k\ge \lambda_{d(k)}(N_k)P_k$ and $\lambda_{d(k)}\ge \lambda_1(N_j)$ we get
$$
P_jXN_kXP_j\ge \lambda_{d(k)}(N_k) P_jXP_kXP_j\ge \lambda_1(N_j) P_jXP_kXP_j,
$$
where in the second inequality we used that $P_jXP_kXP_j=\|P_kXP_j\|_F^2\ge 0$. 
Likewise, using $N_j\le \lambda_1(N_j)P_j$ we obtain
$$
P_kXN_jXP_k\le \lambda_1(N_j) P_kXP_jXP_k= \lambda_1(N_j) \|P_jXP_k\|_F^2.
$$
Taking traces and using the hypothesis we obtain
$$
\lambda_1(N_j) \,\|P_kXP_j\|_F^2\le \lambda_{d(k)}(N_k)\,\|P_kXP_j\|_F^2\le \tr(P_jXN_kX)=\tr(P_kXN_jX)\le \lambda_1(N_j)\,\|P_jXP_k\|_F^2.
$$
Since $\|P_jXP_k\|_F^2= \|P_kXP_j\|_F^2=\tr(P_kXP_jX)$ we have in fact the equalities
$$
 \lambda_{d(k)}(N_k)\tr(P_kXP_jX)=\tr(P_jXN_kX)=\tr(P_kXN_jX)=\lambda_1(N_j) \tr(P_kXP_jX)
$$
If $\|P_kXP_j\|_F=0$, the $P_kXP_j=P_jXP_k=0$ and (\ref{done}) is clear. If not, then it must be that $\lambda_{d(k)}(N_k)=\lambda_1(N_j)=\lambda$. Moreover, note that
$$
A=P_jXN_kXP_j- \lambda  P_jXP_kXP_j \ge 0
$$
but we just proved that $\tr(A)=0$. Therefore $A=0$, that is $P_jXN_kXP_j=\lambda  P_jXP_kXP_j$. Likewise,  since  $P_kXN_jXP_k\le \lambda P_kXP_jXP_k$, by the equality of traces obtained it must be that they are equal, therefore we have
\begin{equation}\label{laecu}
P_jXN_kXP_j=\lambda  P_jXP_kXP_j\qquad \textrm{ and} \qquad P_kXN_jXP_k=\lambda P_kXP_jXP_k.
\end{equation}
assume first that $\lambda\ge 0$, let $N_j=N_j^+-N_j^-$ where $N_j^+,N_j^-\ge 0$ is the Hahn decomposition of $N_j$ into positive and negative parts of $N_j$. Since $N_j\le \lambda P_j$ it must be that $0\le N_j^+\le \lambda P_j$ also, and $0\le (N_j^+)^2\le \lambda^2 P_j$. Now
$$
\lambda P_kXP_jXP_k=P_kXN_jXP_k=P_kXN_j^+XP_k-  P_kXN_j^-XP_k\le  \lambda P_kXP_jXP_k-  P_kXN_j^-XP_k,
$$
therefore $0\le P_kXN_j^-XP_k\le 0$ and this is only possible if $P_kXN_j^-XP_k=0$. Since $P_kXN_j^-XP_k=|\sqrt{N_j^-}XP_k|^2$, it follows that it must be $N_j^-XP_k=0=P_kXN_j^-$. Now 
$$
\|P_kXN_j^+\|_F^2=\tr(P_kX(N_j^+)^2XP_k)\le \lambda^2 \tr(P_kXP_jXP_k)
$$
therefore $\|P_kXN_j^+\|_F\le \lambda \|P_jXP_k\|_F$. With this we will obtain an equality in the following Cauchy-Schwarz inequality:
\begin{align*}
\lambda \|P_jXP_k\|_F^2 & =\tr(P_k XN_jXP_k)=\tr(P_k XN_j^+XP_k)=( P_kXN_j^+ | P_kXPj)\\
& \le \| P_kXN_j^+\|_F\| P_kXPj\|_F \le \lambda \|P_kXPj\|_F^2.
\end{align*}
But the Cauchy-Schwarz inequality is an equality if and only if the vectors are aligned \cite{larcs}, i.e.
$P_kXN_j^+ = \mu P_kXPj$,  and multiplying by $P_jXP_k$ on the right we easily see that $\mu=\lambda$. Therefore 
$$
P_kXN_j=P_kXN_j^+-P_kXN_j^-=P_kXN_j^+=\lambda P_kX P_j,
$$
which is the first identity claimed in (\ref{done}). To obtain the second identity, note that $N_k\ge \lambda P_k\ge 0$ thus $\sqrt{N_k}\ge \sqrt{\lambda}P_k\ge 0$ and then $P_j X\sqrt{\lambda} P_kXP_j\le P_jX\sqrt{N_k}XP_j$. On the other hand, using (\ref{laecu}), we have
$$
\|P_jX\sqrt{N_k}\|_F^2=\tr(P_jX\sqrt{N_k}\sqrt{N_k}XP_j)=\tr(P_jXN_kXP_j)=\lambda \|P_kXP_j\|_F^2=\lambda \|P_jXP_k\|_F^2,
$$
and again we obtain an equality in a Cauchy-Schwarz inequality:
\begin{align*}
\sqrt{\lambda}\|P_kXP_j\|_F^2& =\sqrt{\lambda}\tr(P_jXP_kXP_j)\le \tr(P_jX\sqrt{N_k}XP_j)\\
&= (P_jX\sqrt{N_k}|P_jXP_k)\le \|P_jX\sqrt{N_k}\|_F \|P_jXP_k\|_F\le \sqrt{\lambda} \|P_jXP_k\|_F^2.
\end{align*}
In this case we obtain $P_jX\sqrt{N_k}=\sqrt{\lambda} P_jXP_k$, and then
$$
P_j XN_k=P_jX\sqrt{N_k}\sqrt{N_k}=\sqrt{\lambda} P_jXP_k\sqrt{N_k}=\sqrt{\lambda} P_jX\sqrt{N_k}=\lambda P_jXP_k,
$$
which is the second identity claimed in (\ref{done}). This settles the case of $\lambda\ge 0$. The case of $\lambda<0$ can be dealt in a similar fashion, with some modifications: start by noting that $N_j\le \lambda P_j<0$, thus $-N_j\ge 0$ and moreover $\sqrt{-N_j}\ge \sqrt{-\lambda}P_j\ge 0$. Hence $P_kX\sqrt{-N_j}XP_k\ge P_kXP_j\sqrt{-\lambda}XP_k$ and
\begin{align*}
\sqrt{-\lambda}\|P_jXP_k\|_F^2 &\le \tr(P_kX\sqrt{-N_j}XP_k)=(P_kX\sqrt{-N_j}|P_kXP_j)\\
& \le \|P_kX\sqrt{-N_j}\|_F \|P_kXP_j\|_F\le \sqrt{\tr(P_kX(-N_j)XP_k)} \|P_kXP_j\|_F\\
&=\sqrt{-\lambda\tr(P_kXP_jXP_k)}\|P_kXP_j\|_F=\sqrt{-\lambda}\|P_jXP_k\|_F^2.
\end{align*}
Once more we have an equality in the Cauchy-Schwarz inequality for the trace inner product, which is only possible if $P_kX\sqrt{-N_j}=\sqrt{-\lambda}P_kXP_j$. Argumenting as before (multipliying by $\sqrt{-N_j}$ on the right) we arrive to $P_kXN_j=\lambda P_kXP_j$, and this establishes the first identity in (\ref{done}).  Now note that since $N_k\ge \lambda P_k$ with $\lambda<0$, then if $N_k=N_k^+-N_k^-$ is the Hahn decomposition of $N_k$, it must be that $-N_k^-\ge \lambda P_k$ and $(N_k^-)^2=(-N_k^-)^2\le \lambda^2 P_k$. Argumenting like in the previous case we first see that $P_jXN_k^+XP_j=0$, i.e. $P_jXN_k^+=N_k^+XP_j=0$. Thus $P_jXN_k=P_jXN_k^-$ and
$$
\|P_jXN_k\|_F^2=\tr(P_jXN_k^2XP_j)=\tr(P_jX(N_k^-)^2XP_j)\le \lambda^2 \tr(P_jXP_kXP_j)=\lambda^2 \|P_kXP_j\|_F^2.
$$
For the last time we deal with a Cauchy-Schwarz inequality
\begin{align*}
-\lambda \|P_kXP_j\|_F^2&  =\tr(-P_jXN_kXP_j)=(-P_jXN_k|P_jXP_k)\le \|P_jXN_k\|_F \|P_kXP_j\|_F\\
 & \le |\lambda|\, \|P_kXP_j\|_F^2= -\lambda \|P_kXP_j\|_F^2,
\end{align*}
which now gives us $P_jXN_k=\lambda P_jXP_k$, the second identity in (\ref{done}). Now that we know that $[X_C,N]=0$, recall form Proposition  \ref{ordenados} that when there exists only one $\varphi$ norming $V$, then $N$ is diagonal, i.e. $N=i\sum_k n_kP_k$. But the diagonal part of $X$ (with respect to $V$) is then also diagonal with respect to $N$, or equivalently, $[X_D,N]=0$. Hence in this case $[X,N]=[X_C,N]+[X_D,N]=0+0=0$. On the other hand, if $F_{\varphi}=\{V\}$, then by the same proposition we can't have the equalities $\lambda_{d(k)}(N_k)=\lambda_1(N_j)$, so at each step it must be that $P_kXP_j=0$, showing that $X_C=0$, hence $X=X_D$ or equivalently $X$ commutes with $V$.
\end{proof}

\begin{rem}[Strictly convex norms]\label{scn} If the norm is strictly convex, then each face of the sphere is a singleton, thus in particular $\varphi([X,[X,Y]])=0$ implies $[X,Y]=0$ if $\varphi$ supports $X$.
\end{rem}

\begin{rem}
If $\varphi([X,[X,V]])=0$ for some non-zero $X,V$, and $\varphi$ norming $V$, we can write $V$ as a convex combination of extreme points of the sphere of radius $\|V\|$, i.e. $V=\sum_i\lambda_i V_i$, with $0<\lambda_i\le 1$ and $\sum_i\lambda_i=1$. Since the $V_i$ must all be in the same face as $V$, the functional $\varphi$ norms each of the $V_i$. Then we have 
$$
0=\varphi([X,[X,V]])=\sum_i \lambda_i \varphi([X,[X,V_i]]).
$$
it must be that $\varphi([X,[X,V_i]])=0$ for each $V_i$ (all the terms are non-positive by Lemma \ref{diss}). With a similar reasoning,  let $\varphi=(N|\cdot)=\sum_i\varphi_i=\sum_i(N^i|\cdot)$ with $\varphi_i$ extremes, then it must be $\varphi_i([X,[X,V])=0$.
\end{rem}

\begin{lem}\label{N0}
Let $V=i\sum_k v_kP_k\in\lu$ with $v_1>v_2>\dots>v_F$. There exists $\varphi_0=(N_0|\cdot)$ norming $V$ such that $N_0$ is diagonal, i.e. $N_0=i\sum_{k\in F}\lambda_k P_k$. Let $X\in \lu$, then $\varphi_0([X,[X,V]])=0$ if and only if $[X,N_0]=0$.
\end{lem}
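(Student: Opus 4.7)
The plan is to produce $N_0$ by averaging an arbitrary norming functional of $V$ over the isotropy subgroup of $V$ in $\U$, and then to derive the equivalence as a direct consequence of Theorem \ref{teoN}.

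For existence, I would start with any norming functional $\varphi=(N|\cdot)$ of $V$ (available by Hahn-Banach), and consider the compact Lie subgroup $H=\{U\in\U:UVU^*=V\}$, which under the eigenspace decomposition of $V$ is naturally isomorphic to $\U(d(1))\times\cdots\times\U(d(F))$, with $d(k)=\dim\ran(P_k)$. Set
$$
N_0=\int_H UNU^*\,dU
$$
with respect to normalized Haar measure. Three routine verifications would then show that $\varphi_0=(N_0|\cdot)$ norms $V$: first, $N_0\in\lu$ since the integrand does; second, because $U^*VU=V$ for every $U\in H$, a change of variable gives $(N_0|V)=(N|V)=\|V\|$; third, the convexity of the closed dual unit ball together with the $\Ad$-invariance of $\|\cdot\|'$ yields $\|N_0\|'\le\int_H\|UNU^*\|'\,dU=1$, which combined with the previous equality forces $\|N_0\|'=1$. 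By construction $N_0$ is $H$-invariant, so each self-adjoint block $N_{0,k}$ commutes with the standard (irreducible) action of $\U(d(k))$ on $\ran(P_k)$. Schur's lemma then gives $N_{0,k}=\lambda_k P_k$ for some $\lambda_k\in\R$, so $N_0=i\sum_k\lambda_k P_k$ is diagonal with respect to $V$.

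For the equivalence, I would apply Theorem \ref{teoN} to $\varphi_0$, which already asserts that $\varphi_0([X,[X,V]])=0$ is equivalent to $[X_C,N_0]=0$. All that remains is to check that, for this particular $N_0$, the condition $[X_C,N_0]=0$ is in turn equivalent to $[X,N_0]=0$. But $N_0$ is a real linear combination of the projections $P_k$, and $X_D=\sum_k P_k XP_k$ is block-diagonal with respect to those same projections, so $[X_D,N_0]=0$ holds trivially. Hence $[X,N_0]=[X_C,N_0]+[X_D,N_0]=[X_C,N_0]$, and the chain of equivalences closes.

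The only delicate point I anticipate is verifying that the averaged functional actually has unit dual norm; the key inputs there are the convexity of the dual unit ball and the $\Ad$-invariance of $\|\cdot\|'$. Once existence is established, the equivalence is essentially a matter of specializing Theorem \ref{teoN} to the diagonal case and observing that the block-diagonal part of $X$ introduces no obstruction.
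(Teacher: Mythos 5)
Your proof is correct, and it takes a genuinely different route from the paper's in the construction of $N_0$. The paper averages $N$ over a \emph{finite} collection of transposition unitaries that permute the chosen eigenbasis vectors within each block $\ran(P_k)$, computes by hand that the resulting blocks become scalar multiples of $P_k$, and bounds $\|N_0\|'$ via the triangle inequality for that finite convex combination. You instead integrate $UNU^*$ over the entire isotropy group $H\cong \U(d(1))\times\cdots\times\U(d(F))$ with respect to Haar measure and invoke Schur's lemma to conclude each block $N_{0,k}$ is a scalar on $\ran(P_k)$. Both approaches yield a diagonal $N_0$, and the verification that $\varphi_0$ is norming is structurally identical (change of variable for the value on $V$, convexity plus $\Ad$-invariance for the dual-norm bound). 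Your continuous version is more conceptual and sidesteps the combinatorics of the paper's explicit averaging formula; the paper's finite version is more elementary and keeps everything in closed form. The equivalence half of the lemma is handled identically in both proofs: apply Theorem \ref{teoN} to obtain $[X_C,N_0]=0$, then note that $[X_D,N_0]=0$ is automatic because $N_0$ is a real linear combination of the spectral projections $P_k$ of $V$, so $[X,N_0]=[X_C,N_0]$.

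One small point worth making explicit in your write-up: to apply Schur's lemma you need to know that $N_0$ is block-diagonal with respect to the $P_k$. This holds because $N$ commutes with $V$ (Lemma \ref{diss}) and hence is block-diagonal, every $U\in H$ preserves each $\ran(P_k)$, and block-diagonality is preserved under the integral; then the $k$-th block of $N_0$ commutes with the standard irreducible action of $\U(d(k))$ on $\ran(P_k)$, giving $N_{0,k}=\lambda_k P_k$ with $\lambda_k\in\R$ since $N_{0,k}$ is self-adjoint.
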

\begin{proof}
Let $\varphi=(N|\,\cdot)$ be any norming functional for $V$, then $N=i\sum_{k\in F}N_k$ with $N_k^*=N_k=N_kP_k=P_k$ for all $k\in F$. As before, choose an orthonormal basis $\{e_1^k,\dots,e_{d(k)}^k\}$ of $\ran(P_k)$ diagonalizing each $N_k$ (here $d(k)=dim(P_k)$) i.e. we have 
$$
N_k e_l^k= \lambda_l(N_k)e_l^k\qquad l=1,\dots,d(k)
$$
for each $k\in F$; putting these basis together in the given order, we obtain a basis that diagonalizes $N$ and $V$ simultaneously. Now for each $k\in F$ and for each $l,s\in F_k=\{1,\dots,d(k)\}$, let $U_{k,ls}$ be the unitary matrix exchanging $e_l^k$ with $e_s^k$. Clearly 
$$
U_{k,ls}P_jU_{k,ls}^*=P_j\qquad \forall\; j,k\in F,
$$
thus $U_{k,ls}V U_{k,ls}^*=V$. Note that for each $k$, there are exactly $C(k)=\binom{d(k)}{2}$ ways to pick $l\ne s$ from each $F_k$, and let us denote $|F|$ to the cardinal number of $F$. Let
$$
N_0=\frac{1}{|F|}\sum_{k\in F}\frac{1}{C(k)}\sum_{l\ne s\in F_k}U_{k,ls}NU_{k,ls}^*.
$$
We claim that this $N_0$ has all the desired properties. First, note that by shuffling the diagonal entries from each $N_k$, each new block is a diagonal block with the arithmetic mean of the eigenvalues of 
$N_k$ in the diagonal, that is $N_0=i\sum_k\lambda_kP_k$ with 
$$
\lambda_k=\frac{1}{d(k)}\sum_{l=1}^{d(k)}\lambda_l(N_k).
$$
Now, note that
$$
\|N_0\|'\le \frac{1}{|F|}\sum_{k\in F}\frac{1}{C(k)}\sum_{l\ne s\in F_k}\|N\|'=\frac{1}{|F|}|F|\frac{1}{C(k)}C(k)=1
$$ 
and that
\begin{align*}
(N_0|V)=\frac{1}{|F|}\sum_{k\in F}\frac{1}{C(k)}\sum_{l\ne s\in F_k}(N|U_{k,ls}^*VU_{k,ls})=\frac{1}{|F|}\sum_{k\in F}\frac{1}{C(k)}\sum_{l\ne s\in F_k}\|V\|=\|V\|,
\end{align*}
therefore $\|N_0\|'=1$ and $\varphi_0=(N_0|\cdot)$ norms $V$. If $X\in \lu$ we have 
$$
\varphi_0([X,[X,V]])=(N_0|[X,[X,V]])=([N_0,X]|[X,V])
$$
from where it is clear that $[X,N_0]=0$ implies that this number is $0$. On the other hand and by Theorem \ref{teoN}, if this number is $0$ then $[X_C,N]=0$, but since each block of $N_0$ is diagonal we also have $[X_D,N]=0$.
\end{proof}

\begin{rem}Let $N$ norm $X$; since the eigenvalues of $N_k$ are greater or equal than those of $N_{k+1}$, so is the arithmetic mean of those. Hence using the notation of the previous lemma we have $\lambda_k\ge \lambda_{k+1}$, with equality $\lambda=\lambda_k=\lambda_{k+1}$ if and only if all of them are equal. That is if and only if $\lambda_l(N_k)=\lambda_s(N_{k+1})=\lambda$ for any $l\in\{1,\dots,d(k)\}, s\in \{1,\dots,d(k+1)\}$. This tells us that it must be $N_k+N_{k+1}=\lambda (P_k+P_{k+1})$. On the other hand, if that is not the case, then $\lambda_k>\lambda_{k+1}$, and going through the proof of Theorem \ref{teoN}, in this case $\varphi_0([X,[X,V]])=0$  implies $P_kXP_{k+1}=0$.
\end{rem}

\section{The adjoint action of $\lu$ on itself}\label{action}
 
In this section we give an application of Theorem \ref{teoN}. We begin by giving a characterization of the lateral directional derivatives of a (Finsler) norm function.

\subsection{Lateral derivatives} 

\begin{rem}\label{cocientesincrementales} Let $h:\mathbb R\to \mathbb R$ be a continuous convex function then
$\lim\limits_{t\to 0^+} \frac{h(x+t)-h(x)}{t}$ exists (in the sense that it is a real number or $-\infty$), moreover the quotient is decreasing as $t\to 0^+$: from the convexity follows that if $0<s<t$ then
\begin{equation}\label{decrece}
\frac{h(x+s)-h(x)}{s}\leq \frac{h(x+t)-h(t)}{t},
\end{equation}
showing the monotonicity in the variable $t\to 0^+$, and from here the claim on the limit is apparent. Now for given $X,Y$ in a Finsler normed space $(E,\|\cdot\|)$, with $X\ne 0$, let $h(t)=\|X+tY\|$, and note that this is a continuous convex function. Therefore the limit
$$
\lim\limits_{t\to 0^+} \frac{\|X+tY\|-\|X\|}{t}
$$
exists; due to the fact that the quotient $\frac{\|X+tY\|-\|X\|}{t}$ is bounded below by the number $-\|Y\|$, it follows that the limit is a finite number. Exchanging $t$ with $-t$ we see that $t^{-1}(\|X+tY\|-\|X\|)$ is increasing as $t\to 0^-$, and that the limit also exists; moreover for any $t>0$ we have that
$$
2\|X\|=\|X+tY+X-tY\|\leq \|X+tY\|+\|X-tY\|
$$
which implies that $\|X\|-\|X+tY\|\leq \|X-tY\|-\|X\| $. Dividing both sides by $-t$, it follows that
\begin{equation}\label{rl}
\lim\limits_{t\to 0^+}\frac{\|X+tY\|-\|X\|}{t}\geq \lim\limits_{t\to 0^+}\frac{\|X-tY\|-\|X\|}{-t}=\lim\limits_{t\to 0^-}\frac{\|X+tY\|-\|X\|}{t}.
\end{equation}
\end{rem}

\begin{defi}[Norming functionals]\label{normif} It will be convenient to denote, for $0\ne X\in\lu$
$$
N_X=\{\varphi\in \lu': \|\varphi\|= 1 \hspace{0.2cm} \text{ and }\hspace{0.2cm} \varphi(X)=\|X\|\}.
$$
As we already discussed extensively, this is a nonempty compact convex set in the dual space. Moreover, if $\varphi\in N_X$ then also $\varphi\in N_{\lambda X}$ for any $\lambda>0$ (see Definition \ref{caras}). 
\end{defi}

The following result is well-known for norms (i.e. fully homogeneous). Since we could not find a suitable reference for Finsler norms, we include a proof:

\begin{prop}\label{normderivative} Let $(E,\|\cdot\|)$ be a Finsler normed space and $0\ne X\in E$, then for any $Y\in E$
$$
\lim\limits_{t\to 0^+}\frac{\|X+tY\|-\|X\|}{t}=\max\limits_{\varphi\in N_X}\varphi(Y).
$$
\end{prop}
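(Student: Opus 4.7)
The inequality $\max_{\varphi\in N_X}\varphi(Y)\le \lim_{t\to 0^+}\frac{\|X+tY\|-\|X\|}{t}$ is essentially immediate from the definition of a norming functional: for any $\varphi\in N_X$ and any $t>0$,
$$
t\,\varphi(Y)=\varphi(X+tY)-\varphi(X)\le \|X+tY\|-\|X\|,
$$
and dividing by $t$ and letting $t\to 0^+$ gives the claim. Since $N_X$ is a compact subset of the (finite-dimensional) dual space and $\varphi\mapsto\varphi(Y)$ is continuous, the max is attained. The whole content of the proposition is therefore to produce a single $\varphi\in N_X$ that achieves the limit on the right-hand side.

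To this end, the plan is to apply Hahn-Banach to the function
$$
p(Z):=\lim_{t\to 0^+}\frac{\|X+tZ\|-\|X\|}{t},
$$
which is well defined for every $Z\in E$ by Remark \ref{cocientesincrementales}. I would first verify that $p$ is a sublinear functional on $E$: positive homogeneity is immediate from the definition, and subadditivity follows from the identity
$$
\|X+t(Z_1+Z_2)\|=\left\|\tfrac{1}{2}(X+2tZ_1)+\tfrac{1}{2}(X+2tZ_2)\right\|\le \tfrac{1}{2}\|X+2tZ_1\|+\tfrac{1}{2}\|X+2tZ_2\|,
$$
which, after subtracting $\|X\|$, dividing by $t$ and passing to the limit, yields $p(Z_1+Z_2)\le p(Z_1)+p(Z_2)$. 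Also, the Finsler triangle inequality $\|X+tZ\|\le \|X\|+t\|Z\|$ immediately gives the bound $p(Z)\le \|Z\|$ for every $Z$.

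Next, I would define a linear functional $\phi_0$ on $\mathrm{span}(Y)$ by $\phi_0(\lambda Y)=\lambda p(Y)$. For $\lambda\ge 0$ this coincides with $p(\lambda Y)$; for $\lambda<0$ one checks $\phi_0(\lambda Y)\le p(\lambda Y)$ using $p(Y)+p(-Y)\ge p(0)=0$ (which follows from subadditivity). Hahn-Banach then furnishes a linear $\varphi\colon E\to\mathbb R$ with $\varphi\le p$ everywhere and $\varphi(Y)=p(Y)$. The key verifications are then: (i) $\varphi(X)=\|X\|$, using positive homogeneity to compute $p(X)=\|X\|$ and $p(-X)=-\|X\|$, which together with $\varphi(X)\le p(X)$ and $-\varphi(X)=\varphi(-X)\le p(-X)$ pin $\varphi(X)$ to $\|X\|$; and (ii) $\|\varphi\|=1$, using $\varphi\le p\le \|\cdot\|$ for the upper bound and $\varphi(X/\|X\|)=1$ (noting $\|X/\|X\|\|=1$ by positive homogeneity) for the lower bound. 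Thus $\varphi\in N_X$ and $\varphi(Y)=p(Y)$ realizes the limit, completing the proof.

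The main obstacle is really bookkeeping: the Finsler norm is only positively (not fully) homogeneous, so one must avoid reflex arguments that rely on $\|-Z\|=\|Z\|$ and instead make sure Hahn-Banach is invoked with the correct sublinear functional $p$ rather than $\|\cdot\|$ directly. In particular, the inequality $p(Y)+p(-Y)\ge 0$ is the small point on which the extension $\phi_0\le p$ on $\mathrm{span}(Y)$ rests, and the identities $p(X)=\|X\|$, $p(-X)=-\|X\|$ are what force $\varphi$ to be norming for $X$.
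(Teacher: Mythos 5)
Your proof is correct, and it reaches the conclusion by a genuinely different route from the paper's. Both arguments hinge on Hahn--Banach, but the paper extends the explicit functional $\psi(aX+bY)=a\|X\|+b\,h'(0^+)$ from the two-dimensional subspace $\mathrm{span}\{X,Y\}$, dominating it by the norm itself; verifying $\psi\le\|\cdot\|$ then requires the four-way case analysis on the signs of $a,b$, each case exploiting the monotonicity of the difference quotients from Remark \ref{cocientesincrementales}. You instead package all of that convexity into the single observation that the one-sided directional derivative $p(Z)=\lim_{t\to0^+}t^{-1}(\|X+tZ\|-\|X\|)$ is a sublinear functional with $p\le\|\cdot\|$, extend from the one-dimensional subspace $\mathrm{span}(Y)$ dominated by $p$ rather than by the norm, and then recover both $\varphi(X)=\|X\|$ (from $p(X)=\|X\|$, $p(-X)=-\|X\|$) and $\|\varphi\|\le 1$ (from $\varphi\le p\le\|\cdot\|$) for free. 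This is the standard subdifferential argument for convex functions; it is cleaner in that it eliminates the sign bookkeeping, and it localizes the only delicate point (the absence of full homogeneity) in the two identities $p(\pm X)=\pm\|X\|$ and the inequality $p(Y)+p(-Y)\ge 0$, all of which you verify correctly. The paper's version is more elementary in that it never needs to check sublinearity of $p$ on all of $E$, only inequalities along the plane $\mathrm{span}\{X,Y\}$. Either proof is complete.
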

\begin{proof}
It will be convenient to use the auxiliar map $h(t)=\|X+tY\|$ and the notation $h'(0^+)$ for the limit we are computing. We will also use $h'(0^-)$ for the limit with $t\to 0^-$. First note that for each $\varphi\in N_X$, we have
$$
\frac{h(t)-h(0)}{t}=\frac{\|X+tY\|-\|X\|}{t}\ge \frac{\varphi(X+tY)-\varphi(X)}{t}=\varphi(Y).
$$
Thus $h'(0^+)\ge \max\limits_{\varphi\in N_X}\varphi(Y)$. Let us show that there exists at least one $\varphi\in N_X$ where the equality holds, which will finish the proof. Let $\psi:\text{span}\{x,y\}\longrightarrow \mathbb R$  be the linear functional given by $\psi(aX+bY)=a\|X\|+b h'(0^+)$, for $a,b\in\mathbb R$. Note that $\psi(X)=\|X\|$ and $\psi(Y)=h'(0^+)$. We claim that  $\psi(aX+bY)\leq \|aX+bY\|$, this is clear if $a=0$ or $b=0$. Now recall that $h$ is a decreasing function by (\ref{decrece}). First, suppose that $a>0$ and $b>0$, then
\begin{align*}
a\|X\|+b h'(0^+)&=a\|X\|+b\lim\limits_{t\to 0^+}\frac{\|X+tY\|-\|X\|}{t}\leq a\|X\|+b\frac{\|X+\frac{b}{a}Y\|-\|X\|}{\frac{b}{a}}\\
&=a\|X\|+a\|X+\frac{b}{a}Y\|-a\|X\|=\|aX+bY\|.
\end{align*}
If $a<0$ and $b<0$, recall that the limit from the right is less or equal than the limit from left (\ref{rl}), and that the limit from the left is increasing; since $b<0$ we see that 
\begin{align*}
a\|X\|+b  h'(0^+) &\leq a\|X\|+b  h'(0^-)=a\|X\|+b\lim\limits_{t\to 0^-}\frac{\|X+tY\|-\|X\|}{t}\leq a\|X\|+b\frac{\|X-Y\|-\|X\|}{-1}\\
&=a\|X\|+b\|Y\| -b\|X-Y\|=\|bY-bX\|-\|-(a+b)X\|\\
&\leq \|bY-bX+(a+b)X\|=\|aX+bY\|.
\end{align*}
If $a<0$ and $b>0$ then
\begin{align*}
a\|X\|+b  h'(0^+)&=a\|X\|+b\lim\limits_{t\to 0^+}\frac{\|X+tY\|-\|X\|}{t}\leq a\|X\|+b\frac{\|X-\frac{b}{a}Y\|-\|X\|}{-\frac{b}{a}}\\
&=2a\|X\|+\|-aX+bY\|=\|-aX+bY\|-\|-2aX\|\\
&\leq \|-aX+bY+2aX\|=\|aX+bY\|.
\end{align*}
If $a>0$ and $b<0$ then
\begin{align*}
a\|X\|+b  h'(0^+)& \leq a\|X\|+b \frac{\|X-\frac{b}{a}Y\|-\|X\|}{-\frac{b}{a}}=2a\|X\|-a\|X-\frac{b}{a} Y\|\\
&=\|2aX\|-\|aX-bY\|\leq \|2aX-aX+bY\|=\|aX+bY\|.
\end{align*}
Therefore  $\psi(aX+bY)\leq \|aX+bY\|$ for all $a,b\in\mathbb R$, thus $\|\psi\|=1$. By the Hahn-Banach theorem there exists $\varphi\in E'$ that extends $\psi$ with the same norm; in particular $\varphi(X)=\|X\|$ thus $\varphi\in N_X$. But also $\varphi(Y)=h'(0^+)$, and this finishes the proof.
\end{proof}

\begin{rem} By the previous discussion $h:t\mapsto t^{-1}(\|X+tY\|-\|X\|)$ is non-increasing for $t>0$, and exchanging $Y$ with $-Y$ we see that for $t<0$ the function $h$ is non-decreasing, for $s<0<t$ we have
$$
\frac{\|X+sY\|-\|X\|}{s}\le \frac{\|X+tY\|-\|X\|}{t},
$$
and for each $X\ne 0$ we have
\begin{equation}\label{derlat}
\lim\limits_{t\to 0^-}\frac{\|X+tY\|-\|X\|}{t}=\min\limits_{\varphi\in N_X}\varphi(Y)\le \max\limits_{\varphi\in N_X}\varphi(Y)=\lim\limits_{t\to 0^+}\frac{\|X+tY\|-\|X\|}{t}.
\end{equation}
\end{rem}

\subsection{Dissipative operators and inequalities}

We now recall that an operator $T$ in a complex Banach space $(E,\|\cdot\|_E$) is \textit{dissipative} if for each $\xi\in E$ and each norming functional $\phi\in E^*$ of the vector $\xi$, we have that $\mathrm{Re}\phi(T\xi)\le 0$. From Hille-Yosida's theorem and the theory of dissipative operators (see  \cite[Chapter 1]{pazy} and \cite{lineq}) it follows that for any $T\in \mathcal {B}(E)$

\begin{teo}The following are equivalent
\begin{enumerate}
\item For each $\xi\in E$ there exists some norming functional $\phi\in E^*$ of $\xi$, such that $\mathrm{Re}\phi(T\xi)\le 0$.
\item $\|e^{sT}\|\le 1$ for all $s\ge 0$.
\item $1-sT$ is invertible and expansive for each $s\ge 0$, i.e. $\|\xi -sT\xi\|_E\ge \|\xi\|_E$.
\end{enumerate}
\end{teo}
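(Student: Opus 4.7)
The plan is to establish the cyclic chain $(1)\Rightarrow(3)\Rightarrow(2)\Rightarrow(1)$, combining a direct norming-functional estimate with the Trotter product formula and the lateral derivative formula of Proposition \ref{normderivative}.

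For $(1)\Rightarrow(3)$, fix $0\ne \xi\in E$ and choose a norming $\phi\in E^*$ of $\xi$ with $\re\phi(T\xi)\le 0$. For every $s\ge 0$ we have
\[
\|(I-sT)\xi\|_E \;\ge\; \re\phi\bigl((I-sT)\xi\bigr)\;=\;\|\xi\|_E - s\,\re\phi(T\xi)\;\ge\;\|\xi\|_E,
\]
which is the expansive estimate; in particular $I-sT$ is injective for every $s\ge 0$. Because $T$ is bounded, the Neumann series shows that $I-sT$ is invertible on an interval $[0,s_0)$ with $\|(I-sT)^{-1}\|\le 1$ (from the expansive bound). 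Standard continuation---the set $\Omega\subset[0,\infty)$ where $I-sT$ is invertible is both open (inverse function theorem) and closed (the a priori bound $\|(I-sT)^{-1}\|\le 1$ rules out blow-up at boundary points)---extends invertibility to all of $[0,\infty)$.

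For $(3)\Rightarrow(2)$, apply the Trotter product formula $e^{sT}\xi=\lim_n (I-\tfrac{s}{n}T)^{-n}\xi$, which holds in full generality for $T\in\mathcal B(E)$. Each factor has norm at most one by hypothesis, so $\|e^{sT}\xi\|_E\le\|\xi\|_E$ for all $s\ge 0$.

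For $(2)\Rightarrow(1)$, fix $0\ne \xi\in E$ and consider $g(s)=\|e^{sT}\xi\|_E$. The hypothesis gives $g(s)\le g(0)$ for $s\ge 0$, whence $g'(0^+)\le 0$. Writing $e^{sT}\xi=\xi+sT\xi+o(s)$ and invoking the complex-Banach version of Proposition \ref{normderivative} (which goes through verbatim once one works with real parts of complex functionals, using the weak-$*$ compactness of $N_\xi$ and the bijection between complex linear functionals and their real parts),
\[
g'(0^+)\;=\;\lim_{s\to 0^+}\frac{\|\xi+sT\xi\|_E-\|\xi\|_E}{s}\;=\;\max_{\phi\in N_\xi}\re\phi(T\xi).
\]
The maximum is attained and is $\le 0$, yielding a norming $\phi\in N_\xi$ with $\re\phi(T\xi)\le 0$.

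The two main obstacles are: first, verifying the complex-Banach adaptation of Proposition \ref{normderivative}, where the Hahn-Banach extension of the real-linear functional $\psi$ constructed on $\text{span}_{\mathbb R}\{X,Y\}$ must be promoted to a complex-linear functional via the standard complexification $\phi(\xi)=\psi(\xi)-i\psi(i\xi)$; and second, upgrading injectivity to full invertibility in $(1)\Rightarrow(3)$, which requires the continuation argument to leverage the expansive estimate as a global a priori bound rather than merely a pointwise injectivity statement.
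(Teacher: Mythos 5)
Your argument is correct, but note that the paper does not prove this theorem at all: it is stated as a known consequence of the theory of dissipative operators (essentially the Lumer--Phillips theorem specialized to bounded $T$), with references to Pazy's book and \cite{lineq}. What you have written is a correct, self-contained rendition of that standard argument: the norming-functional estimate for $(1)\Rightarrow(3)$, the resolvent (backward Euler) approximation $e^{sT}=\lim_n(I-\tfrac{s}{n}T)^{-n}$ for $(3)\Rightarrow(2)$ --- which you mislabel as the Trotter product formula, though the formula you actually use is valid for bounded $T$ --- and the lateral-derivative formula for $(2)\Rightarrow(1)$. Your two flagged obstacles are handled adequately: the continuation argument for invertibility works because the a priori bound $\|(I-sT)^{-1}\|\le 1$ gives a uniform radius $1/\|T\|$ of openness, and the complex adaptation of Proposition \ref{normderivative} via $\phi(\xi)=\psi(\xi)-i\psi(i\xi)$ is the standard Lumer device (in the paper's actual application $E$ is finite-dimensional, so attainment of the maximum and the invertibility step are automatic). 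So there is no gap; you have simply supplied the proof the paper outsources to the literature.
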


\begin{rem}[Complexification and the Taylor norm]\label{taylor} Let $A,B\in \lu$, and for a given $\Ad$-invariant norm in $\lu$, let 
$$
\|A+iB\|_T=\sup_{t\in [0,2\pi]}\|A\cos t-B\sin t\|
$$ 
be the Taylor norm of  $A+iB$ in the complexification $M_n(\mathbb C)=\lu\oplus i\lu$ of $\lu$. This is a norm in the complexification that extends the norm in $\lu$, and it is easy to check that $\|A+iB\|_T=\|A-iB\|_T$ and that $\|U(A+iB)U^*\|_T=\|A+iB\|_T$ for any unitary $U$ (in fact, there are may possible complexifications, see \cite{munioz})). Side question: can we recover the $p$-norm on the full $M_n(\mathbb C)$ using any of the standard complexification procedures listed in \cite{munioz}, applied to the $p$-norm in $\lu$?
\end{rem}

\begin{teo}For any $X\in \lu$, the operators $\ad X$ and $\ad^2X$ are dissipative. In particular $1+s\ad X$ is invertible and expansive for all $s\in\mathbb R$, and $1-s\ad^2 X$ is invertible and expansive for all $s\ge 0$.
\end{teo}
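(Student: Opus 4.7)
The plan is to verify the dissipativity hypothesis of the cited theorem for both $\ad X$ and $\ad^2 X$, using Lemma \ref{diss} as the key input, and then extract expansivity/invertibility by a short direct estimate.

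First, fix $X \in \lu$ and an arbitrary $V \in \lu$, and let $\varphi = (N\,|\,\cdot\,)$ be any unit norming functional of $V$. By Lemma \ref{diss}(1),
$$
\varphi(\ad X(V)) = \varphi([X,V]) = 0,
$$
so in particular $\varphi(\pm\ad X(V)) \le 0$; this shows that both $\ad X$ and $-\ad X$ are dissipative (in the strong form: \emph{every} norming functional witnesses the inequality, not just some). Likewise, Lemma \ref{diss}(2) supplies $\varphi(\ad^2 X(V)) = \varphi([X,[X,V]]) \le 0$ for every norming $\varphi$, so $\ad^2 X$ is dissipative as well.

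Next I would deduce the expansivity and invertibility directly rather than invoking the theorem as a black box. For any $s \ge 0$ and any $T \in \{\ad X,\, -\ad X,\, \ad^2 X\}$, choose a norming $\varphi$ of $V$ with $\varphi(TV) \le 0$; then
$$
\|V - sTV\| \;\ge\; \varphi(V - sTV) \;=\; \|V\| - s\,\varphi(TV) \;\ge\; \|V\|,
$$
so $1 - sT$ is expansive, hence injective; since $\lu$ is finite-dimensional, $1-sT$ is also surjective and therefore invertible, with nonexpansive inverse. Applying this to $T = \ad X$ and $T = -\ad X$ (each for $s \ge 0$) and merging the two one-sided statements yields that $1 + s\,\ad X$ is invertible and expansive for all $s \in \mathbb R$; applying it to $T = \ad^2 X$ gives the claim for $1 - s\,\ad^2 X$ with $s \ge 0$.

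I do not anticipate a significant obstacle here: Lemma \ref{diss} already performed the substantive work, and the expansivity argument is a single Cauchy-type estimate. The only minor point worth flagging is that the cited Hille--Yosida style theorem is stated over complex Banach spaces while $\lu$ is a real vector space; this is harmless because the one-line estimate above does not rely on any complex structure, and if one wishes to quote the theorem verbatim one can instead complexify via the Taylor norm of Remark \ref{taylor} and extend $\ad X$ $\mathbb C$-linearly to $M_n(\mathbb C) = \lu \oplus i\lu$, where the same dissipativity estimates persist.
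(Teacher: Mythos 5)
Your proof is correct, but it takes a different route from the paper's. The paper complexifies $\ad X$ to $M_n(\mathbb C)=\lu\oplus i\lu$ with the Taylor norm of Remark \ref{taylor}, observes that $e^{s\ad X}$ is then an isometry for all $s$, and invokes the semigroup equivalence (condition $\|e^{sT}\|\le 1$ implies dissipativity) to conclude that $\pm\ad X$ is dissipative; for $\ad^2X$ it does not verify the functional inequality at all, but instead factors $1-s^2\ad^2X=(1-s\ad X)(1+s\ad X)$, notes that a product of invertible expansive maps is invertible and expansive, and reads off dissipativity of $\ad^2X$ from the third condition of the equivalence theorem. You instead verify the dissipativity inequality directly from Lemma \ref{diss} (whose proof already contains the relevant differentiation of $s\mapsto\varphi(e^{s\ad Y}X)$), and then derive expansivity and invertibility by the standard one-line estimate $\|V-sTV\|\ge\varphi(V-sTV)\ge\|V\|$ plus finite-dimensionality. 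Your version is more elementary and self-contained: it stays in the real space, avoids the complexification and the Hille--Yosida machinery entirely, and actually establishes the stronger statement that \emph{every} norming functional witnesses dissipativity of $\ad^2X$ (the paper's factorization only yields the ``there exists'' form via the equivalence theorem). What the paper's route buys is the semigroup/resolvent formalism itself, which is then reused immediately in the proof of Theorem \ref{vmasxv} (where one needs that $\ad^2X(1-s\ad^2X)^{-1}$ is dissipative); if you adopt your proof, you should still record that $1-s\ad^2X$ has a contractive inverse, as you do, since that is the input needed there. Your flag about the real-versus-complex mismatch in the cited theorem is well taken and is precisely why the paper introduces the Taylor norm.
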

\begin{proof}
If we complexify $\ad X$, i.e. $\ad X(A+iB)=\ad X(A)+i \ad X(B)$, then it follows that  $\|e^{s\ad X}(A+iB)\|_T=\|e^{sX}(A+iB)e^{-sX}\|_T=\|A+iB\|_T$. Thus the complexification of $\ad X$ is dissipative, which implies that $\ad X$ is also dissipative. Changing $X$ with $sX$ and $-sX$, the same holds true for $\pm s\ad X$. Now we can write $1-s^2 \ad^2 X=(1-s\ad X)(1+s\ad X)$, and since both factors are invertible and expansive, so is the product, hence $\ad^2X$ is also dissipative.
\end{proof}

\begin{teo}\label{vmasxv} Let $X,V\in\lu$ and let $\|\cdot\|$ be an $\Ad$-invariant Finsler norm in $\lu$. Then  if $0\le s\le s'$ we have that
$$
\|V\|\le \|V-s[X,[X,V]]\|\le \|V-s'[X,[X,V]]\|
$$
and if  $\,\nicefrac{t'}{t}\ge 1$ we have
$$
\|V\|\le \|V+t[X,V]\|\le \|V+t'[X,V]\|.
$$
\end{teo}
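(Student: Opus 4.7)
The plan is to decompose each chain into two claims: the left-most inequality (lower bound by $\|V\|$) follows immediately from the dissipativity theorem just proved, while the monotonicity (the right-most inequality) reduces to an elementary convexity argument applied to real-valued functions.

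First I would dispatch the lower bounds. The previous theorem tells us that $1-s\ad^2X$ is invertible and expansive for every $s\ge 0$, so applied to the vector $V$ this is exactly $\|V-s[X,[X,V]]\|\ge \|V\|$. Likewise $1+t\ad X$ is invertible and expansive for every $t\in\mathbb R$ (the previous theorem handles $t\ge 0$; the other sign follows by replacing $X$ with $-X$, since $\ad(-X)=-\ad X$), so $\|V+t[X,V]\|\ge \|V\|$ for all $t\in\mathbb R$.

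Next I would handle the monotonicity by considering the real functions
$$f(s)=\|V-s[X,[X,V]]\|,\qquad g(t)=\|V+t[X,V]\|,$$
which are convex on $\mathbb R$ as compositions of a Finsler norm with affine maps. By the bounds of the previous paragraph, $f$ attains its minimum on $[0,\infty)$ at $s=0$ and $g$ attains its global minimum at $t=0$. I would then invoke the elementary fact that a convex function $h:\mathbb R\to\mathbb R$ whose minimum on a half-line is attained at the endpoint must be non-decreasing there; this is immediate from Remark \ref{cocientesincrementales}, since the right-difference quotient $h'(t^+)$ is non-decreasing in $t$, so if $h(b)<h(a)$ for some $a<b$ on the half-line then some quotient in $(a,b)$ is strictly negative, forcing all earlier quotients to be negative and contradicting $h(a)\ge h(0)$. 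Hence $f$ is non-decreasing on $[0,\infty)$, which gives the first chain. For the second chain, the same lemma applied to $g$ on each side of $0$ shows $g$ is non-decreasing on $[0,\infty)$ and non-increasing on $(-\infty,0]$; since the hypothesis $t'/t\ge 1$ says precisely that $t$ and $t'$ share a sign with $|t|\le|t'|$, we get $g(t)\le g(t')$ in both cases.

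There is no serious obstacle here; the only mild asymmetry worth noting is that $\ad^2X$ is dissipative only for non-negative coefficients (morally because $-\ad^2X\ge 0$ in the Hilbert-Schmidt picture) while $\ad X$ is dissipative for both signs, which is exactly why the first monotonicity is restricted to $s\ge 0$ whereas the second extends to all real $t$.
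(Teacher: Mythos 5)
Your argument is correct, and the first half (the lower bounds from expansiveness of $1-s\ad^2X$ and $1+t\ad X$) coincides with the paper's use of the preceding dissipativity theorem. Where you diverge is in the monotonicity step. The paper stays inside the operator framework: for $0\le s\le s'$ it factors $1-s'\ad^2X=A\,(1-s\ad^2X)$ with $A=1-(s'-s)\ad^2X(1-s\ad^2X)^{-1}$, shows that $\ad^2X(1-s\ad^2X)^{-1}$ is itself dissipative (via the resolvent identity $(1-s\ad^2X)^{-1}-1=s\ad^2X(1-s\ad^2X)^{-1}$), concludes that $A$ is expansive, and reads off the inequality; the second chain is then declared analogous. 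You instead observe that $f(s)=\|V-s[X,[X,V]]\|$ and $g(t)=\|V+t[X,V]\|$ are convex (subadditivity plus positive homogeneity of the Finsler norm), that the lower bounds force the minimum of $f$ on $[0,\infty)$ and the global minimum of $g$ to sit at the origin, and that a convex function minimized at the endpoint of a half-line is monotone there — which is exactly the three-point inequality of Remark \ref{cocientesincrementales}. Your route is more elementary, treats both chains uniformly (including the negative-$t$ case, which is genuinely needed to parse the hypothesis $t'/t\ge 1$ and which the paper leaves implicit in its "similar proof omitted"), and makes transparent why the $\ad^2X$ chain is one-sided while the $\ad X$ chain is two-sided. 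What the paper's route buys is the extra operator-theoretic fact that $\ad^2X(1-s\ad^2X)^{-1}$ is dissipative, which is of independent interest in the semigroup picture but is not needed for the stated inequalities.
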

\begin{proof}
We write $\ad^2X=(\ad X)^2$ for short. Let $s\ge 0$, then $1-s\ad^2X$ is expansive and invertible and in particular $\|(1-s\ad^2 X)^{-1}\|\le 1$. Let $\xi=A+iB\in (\lu\oplus i\lu,\|\cdot \|_T)$, let $\phi$ be any norming functional of $\xi$, then
$$
\mathrm{Re}\phi( ((1-s\ad^2 X)^{-1}-1)\xi)=\mathrm{Re}\phi( ((1-s\ad^2 X)^{-1}\xi)-\|\xi\|_T\le \|\xi\|_T-\|\xi\|_T=0,
$$
thus $(1-s\ad^2 X)^{-1}-1=s\ad^2X (1-s\ad^2 X)^{-1}$ is dissipative, and so is $\ad^2X (1-s\ad^2 X)^{-1}$, since any positive multiple of a dissipative operator is dissipative. Thus if $0\le s\le s'$, the operator 
$$
A=(1-s'\ad^2X)(1-s\ad^2X)^{-1}=1-(s'-s)\ad^2 X(1-s\ad^2X)^{-1}
$$
is expansive and invertible, and therefore its inverse is a contraction. Hence
\begin{align*}
\|(1-s\ad^2X)V\|& =(1-s\ad^2X)(1-s'\ad^2X)^{-1}(1-s'\ad^2X)V\|\\
&=\|A^{-1}(1-s'\ad^2X)V\|\le \|(1-s'\ad^2X)V\|,
\end{align*}
and this proves the first assertion. The second assertion has a very similar proof, which is therefore omitted.
\end{proof}

\begin{rem}[Majorization] Using Proposition \ref{majo} we can restate the previous theorem in terms of majorization: if $\lambda(A)\subset \mathbb R^n$ denotes the string of eigenvalues of $A=\sum_k i\lambda_k p_k\in \lu$, then for any $V,X\in \lu$ and $s'/s\ge 1$ we have
$$
\lambda(V)\prec \lambda( V+s[X,V])\prec \lambda(V+s'[X,V]),
$$
and a similar statement for $-[X,[X,V]]$.
\end{rem}

\begin{rem}[Birkhoff orthogonality]\label{bhort}
The previous result can also be rephrased as follows: for any $X,V\in\lu$ and any $\Ad$-invariant norm in $\lu$, the vector $V$ is Birkhoff orthogonal to the subspace $S$ spanned by $[X,V]$ i.e. 
$$
\|V\|=\inf\limits_{s\in\mathbb R}\|V-s[X,V]\|=\mathrm{dist}(V,S).
$$
\end{rem}

\subsection{The case of equality}

For this particular case, and with the equality criteria we developed (Theorem \ref{teoN}), we now add to the equivalences of Proposition \ref{majo} a new equivalence regarding norming functionals and the $ad$-action, for the case of equality in Theorem  \ref{vmasxv} above:

\begin{teo}\label{conotang}Let $X,V\in \lu$ and let $\|\cdot\|$ be an $\Ad$-invariant Finsler norm in $\lu$. The following are equivalent:
\begin{enumerate}
\item $\|V+[X,V]\|=\|V\|$
\item There exists a norming functional  $\psi=(N|\cdot)$ of the matrix $V+[X,V]$ such that $[N,X_C]=0$.
\item $V$ and $V+[X,V]$ belong to a same face of the sphere.
\item $V+[X,V]$ belongs to all the faces of the sphere where $V$ sits in.
\end{enumerate}
The same assertions hold if we replace $[X,V]$ with $-[X,[X,V]]$. If $V$ is a smooth point of the norm, then in the second condition we can replace $[N,X_C]=0$ with $[N,X]=0$. If there exists $\varphi$ norming $V$ such that $F_{\varphi}=\{V\}$ (in particular, if the norm is strictly convex) then equality can only occur if $[X,V]=0$.
\end{teo}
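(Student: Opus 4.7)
The strategy is to first establish the equivalences $(1)\Leftrightarrow(3)\Leftrightarrow(4)$, then derive $(1)\Leftrightarrow(2)$ using Theorem \ref{teoN} in one direction and a fixed-point/invertibility argument in the other, and finally address the ancillary statements. The key tool throughout is Lemma \ref{diss}(1): any norming functional $\varphi=(N|\cdot)$ of $V$ satisfies $\varphi([X,V])=0$ and $[N,V]=0$.

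For $(1)\Rightarrow(4)$ I would take any norming $\varphi$ of $V$; then $\varphi(V+[X,V])=\varphi(V)+\varphi([X,V])=\|V\|+0=\|V+[X,V]\|$, so $\varphi$ norms $V+[X,V]$ as well. The implication $(4)\Rightarrow(3)$ is trivial, and $(3)\Rightarrow(1)$ is immediate because every element of a face of $\partial B_r$ has norm $r$. For $(1)\Rightarrow(2)$, pick $\psi=(N|\cdot)$ norming $V$ (hence also $V+[X,V]$ by the above); applying Lemma \ref{diss}(1) to the pair $(X,V+[X,V])$ yields $\psi([X,V])+\psi([X,[X,V]])=0$, and since $\psi([X,V])=0$ we obtain $\psi([X,[X,V]])=0$. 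Theorem \ref{teoN} then delivers $[N,X_C]=0$.

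The implication $(2)\Rightarrow(1)$ is the main obstacle and the novelty of the argument. Suppose $\psi=(N|\cdot)$ norms $V+[X,V]$ with $[N,X_C]=0$. By Lemma \ref{diss}(1), $N$ commutes with $V+[X,V]$, so using $[X,V]=[X_C,V]$ we have $[N,V]=-[N,[X_C,V]]$. The Jacobi identity together with $[N,X_C]=0$ reduces the right-hand side to $-[X_C,[N,V]]$, yielding $(I+\ad X_C)\,[N,V]=0$. Since $X_C\in\lu$, the invertibility of $I+\ad X_C$ on $\lu$ (established in the dissipative-operator theorem of this section) forces $[N,V]=0$. The cyclic identity (\ref{cyclic}) then gives $\psi([X,V])=(N|[X,V])=(X|[V,N])=0$, so $\|V+[X,V]\|=\psi(V+[X,V])=\psi(V)\le\|V\|$; combined with Theorem \ref{vmasxv} this forces equality.

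The $-[X,[X,V]]$ version follows by the same template, now invoking Lemma \ref{diss}(2) ($\varphi([Y,[Y,V]])\le 0$ for any $\varphi$ norming $V$) to handle the sign, and exploiting the invertibility of $I-\ad^2 X$. For a smooth point $V$ the unique $\psi$ norming $V$ (which also norms $V+[X,V]$ by the equivalence) has $N$ diagonal with respect to $V$ by Proposition \ref{ordenados}, which automatically gives $[N,X_D]=0$, so $[N,X_C]=0\Leftrightarrow[N,X]=0$. Finally, when the norm is strictly convex every face of the sphere is a singleton, so $(3)$ collapses to $V=V+[X,V]$, that is $[X,V]=0$.
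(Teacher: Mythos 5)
Your proof of the main equivalences is correct, but it is organized differently from the paper's and is in places genuinely simpler, in places more roundabout. The paper runs the cycle $(1)\Rightarrow(2)\Rightarrow(3)\Rightarrow(4)\Rightarrow(1)$ and obtains $(1)\Rightarrow(2)$ from the lateral-derivative formula: it computes $f'(1^-)=\min_{\psi\in N_{V+[X,V]}}\psi([X,V])=0$ via Proposition \ref{normderivative} to produce a $\psi$ norming $V+[X,V]$ with $\psi([X,V])=0$, and only then checks that $\psi$ also norms $V$. Your route through $(1)\Rightarrow(4)$ --- any $\varphi$ norming $V$ kills $[X,V]$ by Lemma \ref{diss}, hence norms $V+[X,V]$ once the norms agree --- bypasses that machinery entirely and also disposes of the ``$F_\varphi=\{V\}$'' claim at the end (not just the strictly convex case, which is all you state explicitly). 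Conversely, your $(2)\Rightarrow(1)$ is longer than it needs to be: the paper simply writes $\psi([X,V])=(N\,|\,[X_C,V])=(V\,|\,[N,X_C])=0$ by the cyclic identity (\ref{cyclic}), with no need to first prove $[N,V]=0$. Your detour via the Jacobi identity and the invertibility of $I+\ad X_C$ is valid (that operator is injective since $\ad X_C$ is skew-adjoint for the Frobenius inner product), but it imports the dissipativity theorem where a one-line trace identity suffices.

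The one place where your argument is too quick is the $-[X,[X,V]]$ version of $(2)\Rightarrow(1)$. There the template does not transfer mechanically: from $[N,V-[X,[X,V]]]=0$ and the Jacobi identity one gets $[N,V]=[N,[X,[X,V]]]$, but since only $[N,X_C]=0$ is assumed (not $[N,X]=0$), the reduction leaves a residual term coming from $[X,X_C]=[X_D,X_C]$, namely a contribution of $[[N,X_D],X_C]$ and $[[X_D,X_C],[N,V]]$; one ends up with neither $(I-\ad^2 X)[N,V]=0$ nor $(I-\ad^2 X_C)[N,V]=0$. The same obstruction blocks the direct trace computation: $(N\,|\,[X,[X,V]])=([N,X_C]\,|\,[X_C,V])$ in Remark \ref{conmu} is derived under the hypothesis $[N,V]=0$, which you do not yet have for a functional norming $V-[X,[X,V]]$. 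The paper also omits this case (``similar and therefore omitted''), so you are in good company, but if you want to close it you should first arrange that the functional in condition $(2)$ also norms $V$ (as your $(1)\Rightarrow(2)$ direction in fact produces), so that $N$ is block-diagonal with respect to the $P_k$ and the residual term pairs a block-diagonal matrix against a block-codiagonal one, hence vanishes.
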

\begin{proof}
Assume that the first condition holds. By the previous theorem, we have for $0<h<1$ that
$$
\|V\|=\|V+[X,V]\|\ge \|V+(1-h)[X,V]\|\ge \|V\|.
$$
Therefore if $f(s)=\|V+s[X,V]\|$, we have that 
$$
0=f'(1^-)=\lim_{h\to 0^+}\frac{\|V+(1-h)[X,V]\|-\|V+[X,V]\|}{-h}=\min\limits_{\psi\in N_{V+[X,V]}}([X,V]).
$$
by equation (\ref{derlat}). Therefore there exists $\psi=(N|\cdot)$ norming $V+[X,V]$ such that $\psi([X,V])=0$. Now note that 
$$
\|V\|=\|V+[X,V]\|=(N|V+[X,V])=(N|V)
$$
therefore $\psi$ norms $V$ also. On the other hand, since $\psi$ norms $V+[X,V]$, we have that $\psi\circ \ad V+\psi\circ \ad [X,V]=0$ by Lemma \ref{diss}. Thus
$$
\psi([X,[X,V]])=-\psi\circ \ad[X,V](X)=\psi\circ\ad V(X)=\psi([V,X])=0,
$$
and by Theorem \ref{teoN} it must be that $[N,X_C]=0$, and this shows $1.\Rightarrow 2.$ From the same theorem we see that it must be $[N,X]=0$ if the norm is smooth at $V$, and also that it must be $[X,V]=0$ if the norm is strictly convex. Now assume that the second assertion holds, we see that 
\begin{align*}
\|V\|&\ge \psi(V)=(N|V)=(N|V)+([N,X_C]|V)=(N|V)+(N|[X_C,V])\\
&=(N|V+[X,V])=\psi(V+[X,V])=\|V+[X,V]\|\ge \|V\|
\end{align*}
thus $\psi$ norms simultaneously $V$ and $V+[X,V]$ and they have the same norm, so $2.\Rightarrow 3.$. If the third assertion holds, then it is plain that $\|V+[X,V]\|=\|V\|$, therefore if $V\in F_{\varphi}$ we have that $\varphi([X,V])=0$ hence
$$
\|V\|=\varphi(V)=\varphi(V+[X,V])\le \|V+[X,V]\|=\|V\|
$$
and then $V+[X,V]\in F_{\varphi}$, thus $3.\Rightarrow 4.$ That the fourth assertion implies the first one is apparent. The proof for $V-[X,[X,V]]$ is similar and therefore omitted.
\end{proof}

\begin{coro} Let $X,V\in\lu$, let $\lambda\in\mathbb R$. Then
\begin{enumerate}
\item If $\lambda_k(V)=\pm i\lambda$ for all $k$, then $\|V+[X,V]\|_{\infty}>\|V\|_{\infty}$ unless $[X,V]=0$.
\item If $V=i\lambda P$  with $P$ a one-dimensional projection, then $\|V+[X,V]\|_1>\|V\|_1$ unless $[X,V]=0$.
\item Let $1<k<n$, let $V$ be as in any of the previous two assertions. Then $\|X+[X,V]\|_{(k)}>\|V\|_{(k)}$ unless $[X,V]=0$.
\end{enumerate}
Similar statements hold replacing $[X,V]$ with $-[X,[X,V]]$.
\end{coro}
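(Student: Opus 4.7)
The plan is to prove each strict inequality by direct computation of $V+[X,V]$ in the block decomposition of $X$ with respect to the eigenprojections of $V$. Theorem~\ref{vmasxv} already provides $\|V+[X,V]\|\geq \|V\|$, so in each case I only need to rule out equality unless $X_C=0$ (equivalently $[X,V]=0$). I will not try to use Theorem~\ref{conotang} directly here: for the norms of interest (spectral, trace, Ky--Fan), one can exhibit trivial choices of norming matrix $N$ satisfying $[N,X_C]=0$, so condition (2) of that theorem is too weak on its own to force $X_C=0$.

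For (1), write $V=i\lambda(P_+-P_-)$ with $P_++P_-=I$ and set $A=P_+XP_-$. A short calculation yields $V+[X,V]=i\lambda H$ where
\[
H=\begin{pmatrix}I_+ & -2A\\ -2A^* & -I_-\end{pmatrix},\qquad H^2=I+4\,\diag(AA^*,A^*A).
\]
Therefore $\|V+[X,V]\|_\infty^{2}=|\lambda|^{2}(1+4\|A\|_\infty^{2})$, and equality with $\|V\|_\infty=|\lambda|$ forces $A=0$. The parallel claim for $V-[X,[X,V]]$ follows by computing the block form of the double commutator and bounding the spectral norm from below by that of the top-left block $I_++4AA^*$, which equals $1+4\|A\|_\infty^{2}$.

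For (2), with $V=i\lambda P$ and $P$ one-dimensional, the analogous calculation gives $V+[X,V]=i\lambda(P+C)$, where $C=-a-a^*$ is Hermitian and $a=PX(I-P)$. Then $P+C$ is Hermitian with $\tr(P+C)=1$ and vanishing bottom-right corner. The equality $\|V+[X,V]\|_1=|\lambda|$ rewrites as $\|P+C\|_1=\tr(P+C)=1$, which forces $P+C\geq 0$; combined with the vanishing diagonal corner, positivity yields $(P+C)(I-P)=0$, so the top-right block of $P+C$ is zero, i.e.\ $A=0$. For the $-[X,[X,V]]$ variant the corresponding Hermitian matrix has trace $1$ and bottom-right block $-2A^*A\leq 0$, so $\|\cdot\|_1=1$ again forces positivity and hence $A=0$.

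For (3), I would diagonalize $A$ via its singular value decomposition to reduce $H$ (setup of (1)) to a direct sum of $2\times 2$ blocks $\bigl(\begin{smallmatrix}1 & -2s_i\\ -2s_i & -1\end{smallmatrix}\bigr)$ plus $\pm 1$ on the orthogonal complement, whose non-zero paired eigenvalues are $\pm\sqrt{1+4s_i^{2}}$; in the setup of (2) the non-zero eigenvalues of $P+C$ come out to $\tfrac{1}{2}(1\pm\sqrt{1+4\|A\|^{2}})$ together with $n-2$ zeros. In either scenario the sum of the top $k$ absolute values is $\geq k$ (resp.\ $\geq 1$), with equality possible only when every $s_i$ (resp.\ $\|A\|$) vanishes, i.e.\ when $A=0$. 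The main obstacle is bookkeeping: one must separate the $\sqrt{1+4s_i^{2}}$ contributions from the unperturbed $\pm 1$ eigenvalues according to whether $k\lessgtr 2m$, where $m=\operatorname{rank}(A)$, and verify strict inequality in each sub-case. The corresponding statements for $-[X,[X,V]]$ follow by the same spectral analysis applied to the block form of the double commutator.
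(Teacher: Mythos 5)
Your computations are correct, but your route is genuinely different from the paper's, and your stated reason for avoiding Theorem~\ref{conotang} rests on a misreading of that theorem. The paper's proof is very short: in each case $V$ is an \emph{exposed} point of the corresponding unit sphere --- for the spectral norm these are the symmetries $i\lambda(P_+-P_-)$, for the trace norm the rank-one matrices $i\lambda P$, and for the Ky--Fan norms the extreme-point characterization is imported from Grone--Marcus --- and then the final sentence of Theorem~\ref{conotang} (if $F_\varphi=\{V\}$ for some $\varphi$ norming $V$, then $\|V+[X,V]\|=\|V\|$ forces $[X,V]=0$) finishes the argument at once. You dismissed the theorem because ``condition (2)\,\dots\,is too weak on its own to force $X_C=0$,'' but the paper never tries to derive $X_C=0$ from the existence of a single $N$ with $[N,X_C]=0$; it invokes the exposed-point corollary appended to the theorem, not the equivalence $(1)\Leftrightarrow(2)$. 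Your explicit block computations --- $H^2=I+4\diag(AA^*,A^*A)$ in the spectral case, the observation that $\|P+C\|_1=\tr(P+C)$ forces $P+C\ge 0$ and hence a vanishing off-diagonal block in the trace case, and the $2\times 2$ singular-value reduction for Ky--Fan --- are a valid, self-contained alternative, and the variant claims for $V-[X,[X,V]]$ check out as you describe. The trade-off is that the paper's argument is shorter and uses the general machinery (at the cost of asserting that these $V$ are exposed, and delegating the Ky--Fan case to a citation), while yours makes the mechanism of strict inequality completely explicit and needs no auxiliary characterization of extreme or exposed points; the bookkeeping you flag in case (3) is routine and does go through.
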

\begin{proof}
In the first case, $V$ is an extreme point of the unit sphere and therefore it can be exposed with some $\varphi$ such that $F_{\varphi}=\{V\}$, hence the conclusion follows from the previous theorem. The same proof works for the second assertion, since those are the extreme points of the unit sphere of the trace norm. For the third assertion, one uses a similar argument and the characterization of extreme points of the Ky-Fan norms given in \cite[Theorem 4]{grone}.
\end{proof}

\begin{problem} The conditions of Theorem \ref{conotang} above imply that $[N_V,X_C]=0$ for any norming functional $\varphi_V=(N_V|\cdot)$ of $V$. This is because if $4.$ holds and $\varphi=(N|\cdot)$ is any norming functional for $V$, then it is also norming for $V+[X,V]$, hence repeating the argument of $1.\Rightarrow 2.$ we have that $[N,X_C]=0$.  Does $[N,X_C]=0$ for all $N$ norming $V$ imply that $\|V+s[X,V]\|=\|V\|$ in some interval  $|s|<\delta$?
\end{problem}

\begin{rem}With the adequate precautions, the results of this paper can be stated in the setting of Hermitian matrices.
\end{rem}

\subsection*{Acknowledgements} This research was supported by Consejo Nacional de Investigaciones Cient\'\i ficas y T\'ecnicas (CONICET), Agencia Nacional de Promoci\'on de Cienca y Tecnolog\'\i a (ANPCyT), and Universidad de Buenos Aires (UBA), Argentina.




\end{document}